\documentclass[12pt]{article}

\usepackage{amsfonts}
\usepackage{amssymb}
\usepackage{amsthm}
\usepackage{amsmath}
\usepackage{mathrsfs}
\usepackage{stmaryrd}
\usepackage[latin1]{inputenc}
\usepackage[all]{xy}
\usepackage[left=1.20in,right=1.20in]{geometry}


\newcommand{\ind}{\textrm{Ind}}
\newcommand{\res}{\textrm{Res}}

\newcommand{\ifl}{\textrm{Inf}}
\newcommand{\dtr}{\textnormal{det}}

\newcommand{\rac}{\mathbb Q}

\newcommand{\ent}{\mathbb Z}
\newcommand{\las}{\mathcal{S}}
\newcommand{\bizlie}[1]{\mathop{\,\raisebox{-.5ex}{$\widehat{\raisebox{.9ex}{\rule{2.5ex}{.07ex}}}_{#1}$}\,}}

\newcommand\boitext[1]{\makebox[0pt][l]{$\scriptstyle #1$}}
\def\xar[#1]{\ar@{-}[#1]}

\theoremstyle{plain}
\newtheorem{teo}{Theorem}[section]
\newtheorem{prop}[teo]{Proposition}

\newtheorem{lema}[teo]{Lemma}
\newtheorem{conj}[teo]{Conjecture}

\theoremstyle{definition}
\newtheorem{defi}[teo]{Definition}
\newtheorem{nota}[teo]{Notation}

\theoremstyle{remark}

\newtheorem{obs}[teo]{Observation}

\title{Computing Whitehead groups using genetic bases}
\author{Nadia Romero\footnote{\texttt{nadia.romero@ugto.mx}}\\ 
\begin{small}
Departamento de Matem\'aticas,
\end{small}\\
\begin{small}
Universidad de Guanajuato, Mexico.
\end{small}
}
\date{ }
\begin{document}
\maketitle

\begin{abstract}
 We combine results about Whitehead groups of finite groups with results about genetic bases of finite $p$-groups to compute the Whitehead groups of some metacyclic $p$-groups. Let $C_{p^n}$ denote a cyclic group of order $p^n$ for $p$ an odd prime and $n$ a positive integer. We present a conjecture for the torsion part of the Whitehead group of $C_{p^n}\times C_{p^n}$, and we show that the torsion part of the Whitehead group of the semidirect product of $C_{p^{n-1}}$ by $C_p$ is isomorphic to $C_p^{(n-2)(p-1)}$. The techniques we use can be applied to any abelian $p$-group and to many other $p$-groups for $p$ odd.
\end{abstract}

\section*{Introduction}

The state-of-the-art techniques to calculate Whitehead groups of finite groups are gathered in a book by Robert Oliver of 1988 \cite{bob}. Since then, new results that describe the Whitehead group of a given group usually appear merged in algebraic topology articles that have a different purpose  (see for example \cite{john}, \cite{magu} and \cite{ushi}), no systematic approach to achieve new results  has been done in recent years. Nonetheless, as can be seen in the cited articles, the need to calculate the Whitehead group of certain finite groups remains, and is, in general, not an easy problem. In this article we apply results about \textit{genetic bases} of finite $p$-groups, recently introduced by Serge Bouc, to a specific result from the book of Oliver to do computations that lead us to the following conjecture and theorem.

\theoremstyle{plain}
\newtheorem{conjec}{Conjecture}

\begin{conjec}
Let $p$ be an odd prime and $i$ be a positive  integer. For $n\geq 2i$ define: 
\begin{displaymath}
T_i(n)=(p-1)(2^{E(i)}p^{n-(\left \lfloor{i/2}\right \rfloor +2)}+(n-2i)p^{i-1}), 
\end{displaymath}
where $E(i)$ is $1$ if $i$ is even and $0$ if it is odd, and $\left \lfloor{i/2}\right \rfloor$ is the floor of $i/2$. 

For $n\geq 2$, let $W$ be the torsion part of the Whitehead group of $C_{p^n}\times C_{p^n}$. Then, the multiplicity of $C_{p^i}$, for $0<i<n$, in the decomposition of $W$ as a product of cyclic groups, is $T_i(n)$ if $2i\leq n$ and $T_{n-i}(2(n-i))$ otherwise.
\end{conjec}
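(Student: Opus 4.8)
The plan is to identify the torsion of the Whitehead group with $\mathrm{SK}_1$, to make the rational representation theory of $C_{p^n}\times C_{p^n}$ explicit by means of a genetic basis, and then to push these data through Oliver's computation of $\mathrm{SK}_1$ of a finite $p$-group. The first step is a clean reduction: for an abelian group $G$ the determinant splits $K_1(\mathbb ZG)\cong(\mathbb ZG)^\times\oplus\mathrm{SK}_1(\mathbb ZG)$, the unit group $(\mathbb ZG)^\times$ is finitely generated (Bass) with torsion subgroup exactly $\pm G$ (Higman's trivial-units theorem), so $(\mathbb ZG)^\times/(\pm G)$ is free of finite rank while $\mathrm{SK}_1(\mathbb ZG)$ is finite. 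Hence $\mathrm{tors}\,\mathrm{Wh}(\mathbb ZG)=\mathrm{SK}_1(\mathbb ZG)$, and the conjecture becomes an assertion about the finite abelian $p$-group $\mathrm{SK}_1(\mathbb Z[C_{p^n}\times C_{p^n}])$, which for a $p$-group equals $\mathrm{Cl}_1(\mathbb ZG)$ and can be attacked $p$-locally using Oliver's integral $p$-adic logarithm and his structure theorems; this is where $p$ odd is essential.

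Next I would write down a genetic basis of $G=C_{p^n}\times C_{p^n}$. For an abelian $p$-group with $p$ odd the genetic subgroups are precisely the subgroups $H$ with $G/H$ cyclic, each forming its own class, and they are in bijection with the simple $\mathbb QG$-modules. Counting by Pontryagin duality, the number of $H$ with $G/H\cong C_{p^k}$ is $1$ for $k=0$ and $p^{k-1}(p+1)$ for $1\le k\le n$, so $\mathbb QG\cong\mathbb Q\times\prod_{k=1}^{n}\mathbb Q(\zeta_{p^k})^{\,p^{k-1}(p+1)}$ (one checks $\sum_{k=1}^{n}p^{k-1}(p+1)\,\varphi(p^k)=p^{2n}-1$). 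This records the Wedderburn components of $\mathbb QG$, hence a maximal order and the ramification of $p$ in each component, in a completely explicit form. The genetic basis is what keeps this bookkeeping manageable, and, more to the point, it is what allows the same strategy to be run on the non-abelian group $C_{p^{n-1}}\rtimes C_p$ of the companion theorem, whose genetic basis replaces the family of cyclic sections by a short list that still includes the whole group.

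Finally I would feed these data into Oliver's description of $\mathrm{SK}_1(\mathbb ZG)=\mathrm{Cl}_1(\mathbb ZG)$ for the $p$-group $G$: the group is assembled from the cyclotomic components $\mathbb Q(\zeta_{p^k})$ above, the level multiplicities $p^{k-1}(p+1)$, and the arithmetic of $\mathbb Z[\zeta_{p^k}]$ feeding the integral logarithm — cyclotomic units modulo $(1-\zeta_{p^k})$, and the way the local $p$-adic and global contributions at each component are glued; it is this passage from local to global that turns the raw counts $p^{k-1}(p+1)$ into expressions carrying the factor $(p-1)=\varphi(p)$ of $T_i(n)$. Extracting the multiplicity of each $C_{p^i}$ then reduces to a finite sum of geometric-series type over the levels $1\le k\le n$, from which one expects to see emerge the dominant term $\propto p^{\,n-\lfloor i/2\rfloor-2}$ from the top (most ramified) levels, the linear term $\propto(n-2i)$ from the intermediate levels, and the reflection $i\mapsto n-i$ for $2i>n$ forced by complex conjugation on the $\mathbb Q(\zeta_{p^k})$. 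The hard part is precisely this last step — a closed-form, uniform-in-$n$ evaluation of the cyclotomic contributions — since $p$-parts of class groups of the $\mathbb Q(\zeta_{p^k})$ threaten to intervene, so that a formula as clean as $T_i(n)$ can be guaranteed for every $n$ only after controlling those (Vandiver-type) phenomena or verifying that they do not contribute; together with the need to prove that the numerical pattern, which has been checked for small $n$, persists, this is why the statement is offered as a conjecture, in contrast with the more tractable semidirect-product case that is actually proved.
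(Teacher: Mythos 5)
The statement you are addressing is a conjecture; the paper does not prove it, and neither do you. What the paper actually supplies is computational evidence: the cases $n=2,3$ (due to Oliver), a hand computation for $n=4$, and GAP computations for $n=5,6$ with $p=3$, from which the formula $T_i(n)$ is extrapolated. Your first two steps match the paper's setup. The reduction $\mathrm{tors}\,Wh(G)\cong SK_1(\ent G)=Cl_1(\ent G)$ is the same (the paper gets it from Wall's theorem plus Proposition 12.7 of Oliver rather than from the determinant splitting and Higman's unit theorem, but for abelian $G$ your route is equally valid), and your count of $p^{k-1}(p+1)$ genetic subgroups with quotient $C_{p^k}$ agrees with the paper's enumeration.

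The genuine gap is in your third step, and it is a misdiagnosis of where the difficulty lies. You propose to run Oliver's integral $p$-adic logarithm and the local--global gluing directly on the Wedderburn components $\rac(\zeta_{p^k})$, and you warn that $p$-parts of class groups of cyclotomic fields (Vandiver-type issues) could obstruct a closed form. But the paper never touches that machinery: it invokes Oliver's Theorem 9.5 (Theorem \ref{elbueno}), which for $p$ odd has already absorbed all of the local--global and cyclotomic-unit analysis and reduces $Cl_1(\ent P)$ to the purely combinatorial quotient $T/\langle \textnormal{Im}\,\psi_h \mid h\in P\rangle$, where $T=\prod_{i>1}P/S_i$ over the genetic basis and $\psi_h(g)$ is $gS_i$ or $1$ according to whether $h\in S_i$. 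No class groups intervene for any $n$; for each fixed $n$ the computation is the Smith normal form of an explicit integer matrix (this is exactly what the GAP program in the appendix does). The reason the statement remains a conjecture is therefore not arithmetic subtlety in $\ent[\zeta_{p^k}]$ but the absence of a uniform-in-$n$ evaluation of that cokernel: the relation matrix coming from the $\psi_h$ grows in complexity with $n$, and only its instances $n\le 6$ have been evaluated. Without identifying Theorem 9.5 and the maps $\psi_h$ as the operative tool, your plan both overcomplicates the per-$n$ computation and points the open problem in the wrong direction.
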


This conjecture includes the cases $n=2$ and $3$, proved by Oliver in \cite{bob}. We give a sketch of the proof for the case $n=4$. The formulation of this conjecture relies on observations made for some examples through the use of a GAP \cite{gap} program.  

It is worth noticing that the calculations made by the GAP program as well as the techniques we use here to prove the cases $n=2$, $3$ and $4$ can be applied in the exactly same way to any abelian $p$-group for $p$ odd. In particular, this points out that further calculations with GAP should lead to a conjecture for the torsion part of the Whitehead group of any abelian $p$-group with $p$ odd. However, the computer memory needed to perform these calculations increases rapidly. 

We also prove:

\theoremstyle{plain}
\newtheorem{teore}[conjec]{Theorem}

\begin{teore}
Let $M_n(p)$ be the group given by the presentation
\begin{displaymath}
\langle a,\, b\mid a^{p^{n-1}}=1=b^p,\ b^{-1}ab=a^{p^{n-2}+1}\rangle
\end{displaymath}
for $n\geq 3$ and $p$ odd. Then the torsion part of the Whitehead group of $M_n(p)$ is isomorphic to $C_p^{(n-2)(p-1)}$.
\end{teore}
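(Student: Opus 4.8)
The plan is to read off the Wedderburn decomposition of $\mathbb{Q}M_n(p)$ from a genetic basis of $M_n(p)$ (this is where Bouc's theory enters) and then to apply the relevant result of Oliver \cite{bob} describing the torsion subgroup of the Whitehead group of a $p$-group for $p$ odd. First I would record the internal structure of $M:=M_n(p)$: it has the cyclic maximal subgroup $A=\langle a\rangle\cong C_{p^{n-1}}$ of index $p$; its derived subgroup is $[M,M]=\langle a^{p^{n-2}}\rangle\cong C_p$, which is the unique minimal normal subgroup of $M$; its centre is $Z(M)=\langle a^{p}\rangle\cong C_{p^{n-2}}$; and the abelianization is $M^{ab}\cong C_{p^{n-2}}\times C_p$. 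Hence every non-linear irreducible complex character of $M$ is faithful, of degree $p$, with central character a faithful character of $Z(M)$, and the $p^{n-3}(p-1)$ such characters form a single Galois orbit, giving one rational irreducible representation of endomorphism field $\mathbb{Q}(\zeta_{p^{n-2}})$ and Schur index $1$. Since $M$ has a cyclic subgroup of index $p$, $SK_1(\mathbb{Z}M)=0$, so $\mathrm{Wh}(M)$ embeds with finite cokernel into $K_1(\mathfrak{M})/\langle\pm M\rangle$ for a maximal order $\mathfrak{M}\subset\mathbb{Q}M$, and its torsion subgroup is computed by Oliver's theorem from the simple factors of $\mathbb{Q}M$, the reduced norms of the elements of $M$ in those factors, and the congruence conditions provided by Oliver's integral $p$-adic logarithm.

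Next I would exhibit a genetic basis of $M$ and, using Bouc's bijection with the rational irreducible representations, deduce the Wedderburn decomposition. The genetic subgroups are: $M$ itself (contributing the factor $\mathbb{Q}$); for $1\leq k\leq n-2$, the normal subgroups $Q\supseteq[M,M]$ with $M/Q\cong C_{p^{k}}$, of which there are $p+1$ for $k=1$ and $p$ for $2\leq k\leq n-2$, each contributing a copy of $\mathbb{Q}(\zeta_{p^{k}})$; and $\langle b\rangle$, whose normalizer is $\langle a^{p},b\rangle$ with $\langle a^{p},b\rangle/\langle b\rangle\cong C_{p^{n-2}}$, which induces up to the faithful rational irreducible representation. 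This yields
\[
\mathbb{Q}M_n(p)\cong\mathbb{Q}\times\mathbb{Q}(\zeta_p)^{p+1}\times\prod_{k=2}^{n-2}\mathbb{Q}(\zeta_{p^{k}})^{p}\times M_p(\mathbb{Q}(\zeta_{p^{n-2}})),
\]
which I would double-check by comparing $\mathbb{Q}$-dimensions with $|M|=p^n$. I would also compute the reduced norms of $a$, $b$ and $-1$ in each factor; in particular, in $M_p(\mathbb{Q}(\zeta_{p^{n-2}}))$ the reduced norm of $a$ is the scalar by which $a^{p}$ acts, which is a generator of $\mu_{p^{n-2}}$, and the reduced norm of $b$ is $1$.

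Then I would feed this data into Oliver's formula. I expect the field factors $\mathbb{Q}(\zeta_{p^{k}})$ with $k\leq n-2$ to contribute nothing to the torsion: together they form $\mathbb{Q}[M^{ab}]=\mathbb{Q}[C_{p^{n-2}}\times C_p]$, the group elements act on them through $M^{ab}$, and the roots of unity involved get absorbed by the images of $a$ and $b$ just as in the cyclic and rank-two abelian cases. The matrix factor $M_p(\mathbb{Q}(\zeta_{p^{n-2}}))$ should then contribute $C_p^{(n-2)(p-1)}$: the faithful $p$-dimensional representation forces, at each of the $n-2$ cyclotomic levels, a torsion quotient of rank $p-1$ once the images of $a$ and $b$ are divided out. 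Summing, and using $SK_1(\mathbb{Z}M)=0$, would give the torsion subgroup of $\mathrm{Wh}(M_n(p))$ equal to $C_p^{(n-2)(p-1)}$. As a point of comparison, for $n=3$ one recovers the extraspecial group of order $p^3$ and exponent $p^2$, where the value specializes to $C_p^{p-1}$.

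The main obstacle will be this last step. Oliver's description of the torsion of $\mathrm{Wh}(M)$ is global: the torsion contributed by the different simple factors is coupled through the congruence conditions coming from the integral logarithm, so one cannot simply add up independent local pieces. In fact, since the reduced norm of $a$ already generates the whole group $\mu_{p^{n-2}}$ inside the matrix factor, that factor taken in isolation contributes no torsion; the group $C_p^{(n-2)(p-1)}$ arises only from the interaction between the matrix factor and the many field factors, and the real work is to show that the congruences pin this interaction down exactly, using the explicit reduced-norm data together with the arithmetic of $\mathbb{Q}(\zeta_{p^{n-2}})$ and its subfields. A secondary and more mechanical task is to confirm that the subgroups listed above form a genetic basis: that the associated induced modules are the rational irreducible representations of $M_n(p)$, with the asserted endomorphism fields and multiplicities. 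This is a direct computation with the subgroup lattice.
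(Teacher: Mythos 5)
Your overall strategy --- exhibit a genetic basis, read off the Wedderburn decomposition of $\rac M_n(p)$, and then invoke Oliver's machinery for odd $p$-groups --- is the same as the paper's, and your list of genetic subgroups and the resulting decomposition agree with what the paper establishes. But there are two problems. First, the assertion ``since $M$ has a cyclic subgroup of index $p$, $SK_1(\ent M)=0$'' cannot be right as stated: by Wall's theorem the torsion of $Wh(M)$ \emph{is} $SK_1(\ent M)$, so this would make the theorem assert the trivial group. What the existence of a normal abelian subgroup with cyclic quotient actually gives (Proposition 12.7 in Oliver) is $SK_1(\hat{\ent}_p M)=1$, hence $SK_1(\ent M)=Cl_1(\ent M)$; it is $Cl_1$ that one then computes.

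Second, and more seriously, the computation that produces $C_p^{(n-2)(p-1)}$ is missing. You correctly observe that the contributions of the simple factors are coupled and that ``the real work is to show that the congruences pin this interaction down exactly'' --- but that real work is the proof. The paper does it via Oliver's Theorem 9.5: $Cl_1(\ent P)\cong T/\langle\textnormal{Im}\,\psi_h\mid h\in P\rangle$, where $\psi_h(g)=(\dtr_{K_i}(g,V_i^h))_{i>1}$ is defined on the centralizer $C_P(h)$ and involves the \emph{fixed-point subspaces} $V_i^h$, not merely the reduced norms of group elements on the whole simple factors. Carrying this out requires (i) determining $V(B)^h$ for the one non-inflated module, which the paper does with Mackey's formula, showing $V(B)^H\neq 0$ only for $H=1$ or $H$ a conjugate of $B$; (ii) computing the centralizers of all elements of $M_n(p)$; and (iii) an explicit elimination in the resulting table of relations. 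Your sketch also gets one of the few concrete data points it commits to wrong: the determinant of $b$ on the faithful $p$-dimensional module is $\bar a^{(p-1)p^{n-2}}$, a nontrivial $p$-th root of unity, not $1$ --- and such values are precisely the relations that cut $T$ down to $C_p^{(n-2)(p-1)}$. As written, the proposal sets up the correct framework but defers the decisive step.
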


The case $n=3$ of this theorem was already treated by Oliver in Example 9.9 of \cite{bob}.

The techniques used to prove this result can be generalized to any $p$-group, with $p$ odd, having a normal abelian subgroup with cyclic quotient. In fact, if $P$ is a $p$-group satisfying these conditions, we observe that if  $\{S_1,\ldots ,S_k\}$ is a genetic basis for $P$ with $S_1=P$, then the torsion part of the Whithead group of $P$ is isomorphic to a quotient of $\prod_{i=2}^k N_P(S_i)/S_i$, where $N_P(S_i)$ is the normalizer of $S_i$ in $P$. The definition of genetic basis will be explained in the next section. 

\section{Preliminaries}
 
All groups are finite and written multiplicatively.

The results of this section are taken from Oliver \cite{bob} and Bouc \cite{bouc}. 
 
\subsection{The Whitehead group of a finite group}

Let $R$ be an associative ring with unit. The \textit{infinite general linear group of $R$}, denoted by $GL(R)$, is the direct limit of the inclusions $GL_n(R)\rightarrow GL_{n+1}(R)$ as the upper left block matrix. Now, for each $n>0$, denote by $E_n(R)$ the subgroup of $GL_n(R)$ generated by all elementary $n\times n$-matrices. By taking the direct limit as before but for the groups $E_n(R)$, we obtain a subgroup of $GL(R)$, denoted by $E(R)$. Whitehead's Lemma states that $E(R)$ is equal to the derived subgroup of $GL(R)$. The group $K_1(R)$ is defined as the abelianization of $GL(R)$, which is then equal to $GL(R)/E(R)$.

If $G$ is a finite group and we take $R$ as the group ring $\ent G$, then elements of the form $\pm g$ for $g\in G$ can be regarded as invertible $1\times1$-matrices over $\ent G$ and hence they represent elements in $K_1(\ent G)$. Let $H$ be the subgroup of $K_1(\ent G)$ generated by classes of elements of the form $\pm g$ with $g\in G$. The \textbf{Whitehead group} of $G$ is defined as $Wh(G)=K_1(\ent G)/H$. In the words of Oliver: by construction, $Wh(G)$ measures the obstruction to taking an arbitrary invertible matrix over $\ent G$ and reducing it to some $\pm g$ via a series of elementary operations.

The groups $K_1(\ent G)$ and $Wh(G)$ are finitely generated abelian groups. Their rank was described by H. Bass as in the next theorem, and a first description of the torsion part of $Wh(G)$  is given after the theorem.

\begin{teo}[Theorem 2.6 in \cite{bob}]
\label{rank}
Let $G$ be a group and set $r$ and $q$ as the number of non-isomorphic real and rational, respectively, irreducible representations of $G$.
Then $rk(Wh(G))=rk(K_1(\ent G))=r-q$.
\end{teo}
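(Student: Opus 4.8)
The plan is to compute $rk(K_1(\ent G))$ by pushing everything into the semisimple rational group algebra $\rac G$, running a Dirichlet-unit-theorem count on the centers of its Wedderburn factors, and finally matching the resulting number against the Wedderburn decomposition of the real group algebra $\real G$. The first equality is essentially free: since $Wh(G)=K_1(\ent G)/H$ with $H$ the image of the finite set $\{\pm g : g\in G\}$, we are quotienting by a torsion subgroup, so $rk(Wh(G))=rk(K_1(\ent G))$.

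For the second equality, I would first invoke Maschke and Wedderburn to write
\[
\rac G\ \cong\ \prod_{i=1}^{q} M_{n_i}(D_i),
\]
where each $D_i$ is a division algebra whose center $F_i$ is a number field; the number of simple factors is exactly the number $q$ of irreducible rational representations, and $\ent G$ sits inside as a $\ent$-order. Using Morita invariance and the reduced norm, I would then study the homomorphism
\[
K_1(\ent G)\ \longrightarrow\ K_1(\rac G)\ \cong\ \prod_{i=1}^{q}K_1(D_i)\ \xrightarrow{\ \prod_i\mathrm{nrd}\ }\ \prod_{i=1}^{q}F_i^{\times}.
\]
Two facts make this tractable for a rank computation: its kernel is finite (it is essentially $SK_1(\ent G)$, whose finiteness is one of the central results of Oliver's book), so it does not affect the rank; and the reduced norm of a unit of an order is an algebraic unit, so the image lands in the finitely generated group $\prod_i\mathcal{O}_{F_i}^{\times}$. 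Wang's theorem, that $SK_1$ of a central simple algebra over a number field vanishes, then identifies each $K_1(D_i)$ with the subgroup of $F_i^{\times}$ cut out by the Hasse--Schilling--Maass sign conditions at the ramified real places.

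The count is now arithmetic. Imposing finitely many sign conditions passes only to a finite-index subgroup of $\mathcal{O}_{F_i}^{\times}$, so by Dirichlet's unit theorem the $i$-th factor contributes rank $r_1(F_i)+r_2(F_i)-1$, where $r_1,r_2$ count the real and complex places of $F_i$; hence $rk(K_1(\ent G))=\sum_{i=1}^{q}\bigl(r_1(F_i)+r_2(F_i)-1\bigr)$. To finish, I would extend scalars: $\real G\cong\prod_i\bigl(M_{n_i}(D_i)\otimes_{\rac}\real\bigr)$, and for each $i$ the archimedean completions split $M_{n_i}(D_i)\otimes_{\rac}\real$ into precisely $r_1(F_i)+r_2(F_i)$ simple real factors (each real place yielding one factor over $\real$ or $\mathbb{H}$, each complex place one factor over $\com$). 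Summing over $i$, the total number of simple factors of $\real G$, which is the number $r$ of irreducible real representations, equals $\sum_i\bigl(r_1(F_i)+r_2(F_i)\bigr)$, so $r-q=\sum_i\bigl(r_1(F_i)+r_2(F_i)-1\bigr)=rk(K_1(\ent G))$.

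I expect the genuine obstacle to be the control of the map into $K_1(\rac G)$: namely the finiteness of $SK_1(\ent G)$ and Wang's vanishing of $SK_1$ for simple algebras over number fields. These are the deep arithmetic inputs; once they are in hand, the Wedderburn decomposition, the Dirichlet count, and the bookkeeping of archimedean places are essentially formal.
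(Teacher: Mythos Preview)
The paper does not prove this theorem; it is quoted as a preliminary result from Oliver \cite{bob} (attributed to Bass), so there is no proof in the paper to compare against. Your outline is the standard Bass argument and is structurally correct.

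There is one genuine gap in the sketch. You argue that the kernel of $K_1(\ent G)\to\prod_i F_i^\times$ is finite and that the image lands in $\prod_i\mathcal{O}_{F_i}^\times$, and then you compute the rank of the \emph{target} (the Hasse--Schilling--Maass subgroup of $\prod_i\mathcal{O}_{F_i}^\times$) via Dirichlet. But this only yields $rk(K_1(\ent G))\leq\sum_i(r_1(F_i)+r_2(F_i)-1)$; to get equality you must also show the image has \emph{finite index} in that target. The usual device is to pass through a maximal $\ent$-order $\Lambda\supseteq\ent G$ in $\rac G$: since $\ent G$ has finite index in $\Lambda$, the map $K_1(\ent G)\to K_1(\Lambda)$ has finite cokernel, and for a maximal order the reduced norm surjects onto the full group of units satisfying the sign conditions (this is where the strong approximation/Eichler condition enters). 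Once that step is inserted, your argument is complete and coincides with the proof in Oliver's book.
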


The group $\mathbf{SK_1}(\ent G)$ is defined as the kernel of the morphism $K_1(\ent G)\rightarrow K_1(\rac G)$. By Theorem 2.5 in \cite{bob} it is a finite group and by a theorem of C. Wall (Theorem 7.4 in \cite{bob}) it is isomorphic to the torsion subgroup of $Wh(G)$. To study $SK_1(\ent G)$, Oliver introduces another group, in the following way: for each prime $p$, let $\hat{\ent}_p$ and $\hat{\rac}_p$ denote the $p$-adic completions of $\ent$ and $\rac$ respectively, and set $SK_1(\hat{\ent}_p G)$ as the kernel of the morphism $K_1(\hat{\ent}_p G)\rightarrow K_1(\hat{\rac}_p G)$. Then define $\mathbf{Cl_1}(\ent G)$ as the kernel of the localization morphism
\begin{displaymath}
l:SK_1(\ent G)\twoheadrightarrow \bigoplus_p SK_1(\hat{\ent}_p G).
\end{displaymath}
The sum on the right is actually finite since $SK_1(\hat{\ent}_p G)=1$ if $p\nmid |G|$, and by Theorem 3.9 in \cite{bob}, the morphism $l$ is onto.

\begin{prop}
[Proposition 12.7 in \cite{bob})] Let $p$ be a prime and $G$ be any group. If a Sylow $p$-subgroup of $G$ has a normal abelian subgroup with cyclic quotient, then $SK_1(\hat{\ent}_p G)=1$.
\end{prop}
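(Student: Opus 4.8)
The plan is to reduce the statement to the case of a single $p$-group by a transfer argument, and then to settle that case by induction along central quotients of order $p$. It is known (see \cite{bob}) that $SK_1(\hat\ent_p G)$ is always a finite $p$-group. If $P$ is a Sylow $p$-subgroup of $G$, then $\hat\ent_p G$ is free of finite rank $[G:P]$ over $\hat\ent_p P$, so there are restriction and transfer maps whose composite $SK_1(\hat\ent_p G)\xrightarrow{\ \res\ }SK_1(\hat\ent_p P)\xrightarrow{\ \mathrm{tr}\ }SK_1(\hat\ent_p G)$ is multiplication by $[G:P]$; since $[G:P]$ is prime to $p$ it acts bijectively on the finite $p$-group $SK_1(\hat\ent_p G)$, so $\res$ is injective. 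Thus it suffices to prove $SK_1(\hat\ent_p P)=1$ when $P$ is a finite $p$-group having a normal abelian subgroup $A$ with $P/A$ cyclic. Note that such a $P$ is metabelian, $[P,P]\le A$, and $P=\langle A,t\rangle$ for any lift $t\in P$ of a generator of $P/A$.

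I would proceed by induction on $|P|$. If $P=A$ is abelian, then $\hat\ent_p A$ is a commutative (indeed local) ring, so $K_1(\hat\ent_p A)=(\hat\ent_p A)^\times$, and the map to $K_1(\hat\rac_p A)=(\hat\rac_p A)^\times$ is the inclusion of unit groups induced by $\hat\ent_p A\hookrightarrow\hat\rac_p A$, which is injective; hence $SK_1(\hat\ent_p A)=1$. Now assume $P$ is non-abelian and the proposition holds for all smaller $p$-groups of this shape. As $A\neq1$ is normal in the $p$-group $P$ we have $A\cap Z(P)\neq1$, so we may choose a central subgroup $C=\langle c\rangle\le A$ of order $p$. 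Then $A/C$ is normal abelian in $\bar P:=P/C$ with $\bar P/(A/C)\cong P/A$ cyclic, so $SK_1(\hat\ent_p\bar P)=1$ by induction. Let $\varphi:\hat\ent_p P\twoheadrightarrow\hat\ent_p\bar P$ be the projection, whose kernel is the two-sided ideal $I=(c-1)\hat\ent_p P$. Because $SK_1(\hat\ent_p\bar P)=1$, it now suffices to show that $\varphi_*:SK_1(\hat\ent_p P)\to SK_1(\hat\ent_p\bar P)$ is injective, equivalently that the relative contribution of $I$ to $SK_1(\hat\ent_p P)$ is trivial.

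This relative computation is the crux. As $c$ is central with $c^p=1$ one checks that $I^p\subseteq p\,\hat\ent_p P$, so $I$ is topologically nilpotent and lies in the radical of the complete ring $\hat\ent_p P$; hence Oliver's integral $p$-adic logarithm \cite{bob} applies to the relative $K_1$-term attached to $I$ and identifies it, up to the usual logarithmic correction (which is what lets one isolate the $SK_1$-part), with a subquotient of $I/[\hat\ent_p P,I]$, where $[\hat\ent_p P,I]$ denotes the additive subgroup generated by the elements $xy-yx$ with $x\in\hat\ent_p P$ and $y\in I$. Into this description one feeds the structure of $P$: since $A$ is abelian, $\hat\ent_p A$ is commutative, and since $P/A=\langle\bar t\rangle$ is cyclic, $\hat\ent_p P=\bigoplus_{j}\hat\ent_p A\cdot t^j$ for a fixed lift $t$; expanding $I=(c-1)\hat\ent_p P$ in this decomposition and using elementary commutator identities, one sees that every class of $I$ that can contribute to $SK_1$ is represented by an additive commutator $t^j\beta-\beta t^j$ with $\beta\in I$, hence vanishes in $I/[\hat\ent_p P,I]$. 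Thus no nonzero class survives, so $SK_1(\hat\ent_p P)=1$, and the induction closes.

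The first steps (reduction to a $p$-group, the abelian base case) are essentially formal; the genuine difficulty is the relative computation above. It is exactly there that the hypotheses ``$A$ abelian'' and ``$P/A$ cyclic'' are consumed — the conclusion fails for a general normal abelian $A$, and also without the cyclicity of $P/A$ — and it is the step that really requires Oliver's logarithmic machinery together with careful bookkeeping of the $I$-adic filtration and of commutators in $\hat\ent_p P$, rather than any soft argument. A route of comparable difficulty would be to quote Oliver's general (co)homological description of $SK_1(\hat\ent_p G)$ for an arbitrary finite $G$ in terms of the homology of $G$ and of its subgroups, and then to check directly that the relevant group vanishes for $P$ of this special form.
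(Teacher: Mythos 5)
First, a remark on the comparison itself: the paper offers no proof of this statement. It is quoted verbatim as Proposition 12.7 of Oliver's book \cite{bob} and used as a black box, so the only argument to measure yours against is Oliver's. His route is quite different from the one you sketch: he first reduces $SK_1(\hat{\ent}_p G)$ to $p$-elementary subgroups via induction theory (not to a single Sylow subgroup), and then invokes an explicit homological description of $SK_1(\hat{\ent}_p P)$ for $p$-groups $P$ --- roughly, $H_2(P)$ modulo the images of $H_2(A)$ over abelian subgroups $A\leq P$ --- from which the vanishing is read off, using that the hypothesis ``normal abelian subgroup with cyclic quotient'' is inherited by all subgroups.

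Measured on its own terms, your sketch has two genuine gaps. (a) The opening transfer argument is invalid: for $K_1$ of group rings the composite $\textnormal{Ind}_P^G\circ\textnormal{Res}_P^G$ is induced by $M\mapsto \hat{\ent}_p[G/P]\otimes M$ with diagonal action, i.e.\ it is multiplication by the Burnside-ring element $[G/P]$, \emph{not} by the integer $[G:P]$. The identity $\mathrm{tr}\circ\res=[G:P]\cdot\mathrm{id}$ that you are importing holds for cohomological Mackey functors such as group cohomology, but $SK_1(\hat{\ent}_p-)$ is not one, so injectivity of restriction to a Sylow $p$-subgroup does not follow; this is exactly why Oliver's reduction passes through all $p$-elementary subgroups (and silently uses that the hypothesis on $P$ is subgroup-closed, a point you would also need). (b) The crux is asserted rather than proved: the claim that ``every class of $I$ that can contribute to $SK_1$ is represented by an additive commutator $t^j\beta-\beta t^j$'' is precisely the content of the proposition, the phrase ``that can contribute to $SK_1$'' is left undefined, and it is nowhere visible how the hypotheses on $A$ and on $P/A$ enter. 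Some such control is indispensable: there exist $p$-groups with $SK_1(\hat{\ent}_p P)\neq 1$, so the inductive step along a central $C$ of order $p$ cannot close for arbitrary $P$, and the identification of the relative term with a subquotient of $I/[\hat{\ent}_p P,I]$ ``up to the usual logarithmic correction'' elides the part of Oliver's logarithm machinery (the comparison with the Frobenius-twisted trace map that isolates the torsion) where all the work lies. As it stands the proposal is a plausible road map, not a proof.
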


This implies that if $G$ is an abelian group or a metacyclic group, describing the torsion part of $Wh(G)$ amounts to describing the group $Cl_1(\ent G)$.

We now focus on the results we will use when working on $p$-groups with $p$ odd. 

The first ingredient is Roquette's Theorem. We will consider the following more recent version appearing as Theorem 9.4.1 in Bouc \cite{bouc}.

\begin{teo}\label{broq}
Let $p$ be a prime, $P$ a $p$-group and $V$ a simple $\rac P$-module. Then there exists a section $S\trianglelefteqslant T\leqslant P$ and a faithful irreducible $\rac (T/S)$-module $W$ such that:
\begin{enumerate}
\item The module $V$ is isomorphic to $\textnormal{Ind}_{\,T}^{\,P}\textnormal{Inf}_{\,T/S}^{\,T}W$.
\item This isomorphism induces an isomorphism of $\rac$-algebras 
\[\textnormal{End}_{\rac P}V\cong \textnormal{End}_{\rac (T/S)}W.\]
\item The group $T/S$ is cyclic if $p$ is odd and dihedral, semi-dihedral, generalized quaternion or cyclic if $p=2$.
\end{enumerate}
\end{teo}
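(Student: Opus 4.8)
The plan is to prove the theorem by induction on $|P|$, peeling off one induction and one inflation at each stage until one reaches a group on which $V$ becomes a \emph{faithful primitive} representation. Throughout, call a simple $\rac P$-module $V$ primitive if it is not isomorphic to $\ind_H^P U$ for any proper subgroup $H<P$ and any $\rac H$-module $U$; the base case and the inductive step will be organized around whether $V$ is primitive.

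First I would reduce to the case that $V$ is faithful. If $S_0$ denotes the kernel of the action of $P$ on $V$, then $V\cong\ifl_{P/S_0}^P\bar V$ for a faithful simple $\rac(P/S_0)$-module $\bar V$, and since inflation is fully faithful we already have $\mathrm{End}_{\rac P}V\cong\mathrm{End}_{\rac(P/S_0)}\bar V$. Applying the theorem inductively to $\bar V$ over the smaller group $P/S_0$ yields a section $\bar S\trianglelefteq\bar T\leq P/S_0$; pulling this back to $P$ and using the transitivity of induction and inflation together with the commutation relation $\ifl_{P/S_0}^P\ind_{\bar T}^{P/S_0}\cong\ind_T^P\ifl_{T/S_0}^T$, for $T$ the preimage of $\bar T$, transports the whole conclusion back to $P$. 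So from now on $V$ is faithful.

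The technical heart is a single lemma: \emph{if $A\trianglelefteq P$ is abelian and $V|_A$ is homogeneous, then $A$ is cyclic.} Indeed, homogeneity means $V|_A\cong U_0^{\oplus k}$ for one simple $\rac A$-module $U_0$, and since $V$ is faithful so is $U_0$ as an $A$-module; as $\rac A$ is commutative, $U_0$ is a module over a field $F=\rac A/\mathfrak m$, so $A$ embeds in $F^\times$ and is therefore cyclic. In particular the center $Z(P)$, which always acts homogeneously because $P$ fixes every character of $Z(P)$, is automatically cyclic. Now I split into two cases. If $V$ is primitive, then for \emph{every} abelian normal subgroup $A$ the restriction $V|_A$ must be homogeneous: otherwise Clifford theory lets $P$ permute the (at least two) homogeneous components transitively, and $V$ is then induced from the proper stabilizer of one of them, contradicting primitivity. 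Hence every abelian normal subgroup of $P$ is cyclic, and by the classical structure theorem for $p$-groups with this property, $P$ is cyclic when $p$ is odd and is cyclic, dihedral, semi-dihedral or generalized quaternion when $p=2$. In this case the theorem holds with $T=P$, $S=1$ and $W=V$, all three conclusions being immediate.

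If instead $V$ is imprimitive, I would choose a noncyclic abelian normal subgroup $A$ (which exists precisely when $P$ is not already in the list above), so that $V|_A$ is \emph{not} homogeneous by the lemma. Clifford theory over the field $\rac$ then provides a proper inertia subgroup $T=\mathrm{Stab}_P(V_0)$ of a homogeneous component $V_0$ and a simple $\rac T$-module $U$ with $V\cong\ind_T^P U$ and $\mathrm{End}_{\rac P}V\cong\mathrm{End}_{\rac T}U$, the Clifford correspondence preserving endomorphism algebras over any field. Since $|T|<|P|$, the inductive hypothesis applied to $(T,U)$ furnishes $S\trianglelefteq T'\leq T$ and a faithful irreducible $W$ with $U\cong\ind_{T'}^T\ifl_{T'/S}^{T'}W$ and $\mathrm{End}_{\rac T}U\cong\mathrm{End}_{\rac(T'/S)}W$. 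Composing via transitivity of induction gives $V\cong\ind_{T'}^P\ifl_{T'/S}^{T'}W$, and chaining the two endomorphism-algebra isomorphisms yields conclusion (2), while conclusion (3) is inherited from the smaller group. I expect the main obstacle to be the purely group-theoretic classification invoked in the primitive case, namely that a $p$-group all of whose abelian normal subgroups are cyclic must lie in the stated list: this is exactly where the distinction between odd $p$ and $p=2$, and the appearance of the dihedral, semi-dihedral and quaternion families, genuinely enters, whereas the representation-theoretic bookkeeping (the Clifford correspondence over $\rac$ and the transitivity of $\ind$ and $\ifl$) is routine by comparison.
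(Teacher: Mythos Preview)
The paper does not prove this theorem; it is quoted as Theorem 9.4.1 of Bouc \cite{bouc} and used as a black box, so there is no in-paper argument to compare against. Your outline is essentially the classical proof of Roquette's theorem and is nearly complete, but the case split has a genuine gap.

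You divide according to whether $V$ is primitive. In the primitive case you correctly show every abelian normal subgroup of $P$ is cyclic, whence $P$ belongs to the Roquette list. In the imprimitive case you then write ``choose a noncyclic abelian normal subgroup $A$ (which exists precisely when $P$ is not already in the list above)''. The parenthetical is true as a characterization of the list, but it does \emph{not} follow from imprimitivity of $V$: for $P=C_{p^n}$ with $n\geq 2$, the faithful simple module $V=\rac(\zeta_{p^n})$ satisfies $V\cong\ind_{C_{p^{n-1}}}^{C_{p^n}}\rac(\zeta_{p^{n-1}})$, so $V$ is imprimitive while $P$ is cyclic and has no noncyclic abelian normal subgroup. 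Your Clifford step then has no $A$ to start from.

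The repair is to reorganize the dichotomy around $P$ rather than around $V$: either $P$ is on the Roquette list, in which case take $T=P$, $S=1$, $W=V$ regardless of primitivity; or $P$ is not, in which case the structure theorem guarantees a noncyclic abelian normal subgroup $A$, your lemma forces $V|_A$ to be inhomogeneous, and Clifford theory over $\rac$ produces a proper inertia subgroup $T$ and a simple $\rac T$-module $U$ with $V\cong\ind_T^P U$ and $\textnormal{End}_{\rac P}V\cong\textnormal{End}_{\rac T}U$, so you can induct on $|T|$. With this adjustment the remaining ingredients---the reduction to faithful $V$ via inflation, the lemma that a faithful module with homogeneous restriction to an abelian normal $A$ forces $A$ cyclic, the classification of $p$-groups whose abelian normal subgroups are all cyclic, and the transitivity bookkeeping for $\ind$ and $\ifl$---fit together correctly.
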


Each one of the groups appearing in 3 has only one faithful irreducible rational representation. If $p$ is odd  and the quotient $T/S$ is cyclic of order $p^r$, then this representation is   $\rac (\zeta_r)$, where $\zeta_r$ is a primitive $p^{r}$-th root of unity and where a generator of $T/S$ acts by multiplication by $\zeta_r$. In this case, the $\rac$-algebra $\rac (\zeta_r)$ is also isomorphic to $\textrm{End}_{\rac (T/S)}W$.

The following definition is taken from \cite{bob} but not written in its entire generality, we write it specifically for the case $p$ odd.


\begin{defi}[Definition 9.2 in \cite{bob}]\label{lapsi}
Let $p$ be an odd prime and write $\rac P=\prod_{i=1}^kA_i$, where $A_i$ is simple with irreducible module $V_i$ and center $K_i$. By the previous theorem, for each $1\leq i\leq k$, the field $K_i$ is isomorphic to $\rac (\zeta_{r_i})$, where $\zeta_{r_i}$ is a primitive $p^{r_i}$-th root of unity for some non-negative $r_i$, and $A_i$ is isomorphic to a matrix algebra over $\rac(\zeta_{r_i})$.

Suppose that $V_1$ is the trivial module and consider the abelian group $T=\prod_{i=2}^k\langle\zeta_{r_i}\rangle$. For each $h\in P$, define
\begin{displaymath}
\psi_h:C_P(h)\rightarrow T,\quad \psi_h(g)=(\dtr_{K_i}(g, V_i^h))_{i>1}
\end{displaymath}
where $V_i^h=\{x\in V_i\mid hx=x\}$. Here $V_i^h$ is viewed as a $K_iC_P(h)$-module, so $\dtr_{K_i}(g, V_i^h)$ is the determinant (in $K_i$) of the action of $g$ in $V_i^h$. Since $P$ is a $p$-group, this determinant is in $\langle \zeta_{r_i}\rangle$.
\end{defi}
It is easy to see that $\psi_h$ is a group homomorphism.

The last result of this section tells us how to find the group $Cl_1(\ent P)$.

\begin{teo}[Theorem 9.5 in \cite{bob}]\label{elbueno}
Let $p$ be an odd prime and consider $T$ as in the previous definition. Also, for every $h$ in $P$ define $\psi_h:C_P(h)\rightarrow T$ as before. Then
\begin{displaymath}
Cl_1(\ent P)\cong T/\langle \textnormal{Im}\psi_h\mid h\in P\rangle .
\end{displaymath}
\end{teo}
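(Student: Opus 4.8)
The plan is to obtain the isomorphism from the arithmetic localization sequence in $K$-theory together with Oliver's integral $p$-adic logarithm, assuming the structural machinery developed in the earlier chapters of \cite{bob}; the genuinely delicate content is the translation of an abstract ``image'' into the explicit determinant homomorphisms $\psi_h$.

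First I would fix the Wedderburn decomposition $\rac P=\prod_{i=1}^k A_i$ of Definition \ref{lapsi}, with $V_1$ the trivial module. By Theorem \ref{broq} and the remark following it, for $p$ odd each $A_i$ is a matrix algebra $M_{n_i}(K_i)$ over its center $K_i\cong\rac(\zeta_{r_i})$, with no genuine division part. Morita invariance of $K_1$ and the reduced norm then give $K_1(A_i)\cong K_i^\times$ and $SK_1(A_i)=1$, so $K_1(\rac P)\cong\prod_i K_i^\times$. The target group $T=\prod_{i>1}\langle\zeta_{r_i}\rangle$ is precisely the $p$-primary part of the torsion of $\prod_{i>1}K_i^\times$, i.e. the group of $p$-power roots of unity in the centers $K_i$; the block $i=1$ is discarded because its center is $\rac$, which has no nontrivial $p$-power root of unity for $p$ odd. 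This identifies $T$ as the natural receptacle for the torsion of the local reduced norms.

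Second I would realize $Cl_1(\ent P)$ as a cokernel. Since $P$ is a $p$-group, the remark after the definition of $Cl_1$ gives $SK_1(\hat{\ent}_q P)=1$ for every prime $q\neq p$, so the localization map has a single nontrivial target. Feeding the reduced-norm/determinant maps into the Mayer--Vietoris (localization) sequence attached to the arithmetic fiber square expressing $\ent P$ through $\hat{\ent}_p P$, $\rac P$ and $\hat{\rac}_p P$, and using that $SK_1$ of the maximal order and of $\hat{\rac}_p P$ vanish, the connecting homomorphism realizes $Cl_1(\ent P)$ as the cokernel of the map induced by the local units $K_1(\hat{\ent}_p P)$ into the $p$-primary torsion $T$ of $\prod_{i>1}K_i^\times$. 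This reduces the theorem to computing the image of $K_1(\hat{\ent}_p P)$ inside $T$.

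Third I would compute that image with the integral $p$-adic logarithm $\Gamma$ and the Hattori--Stallings identification of the cocenter $\hat{\ent}_p P/[\hat{\ent}_p P,\hat{\ent}_p P]$ with the free $\hat{\ent}_p$-module on the conjugacy classes of $P$. Decomposing the composite of $\Gamma$ with the block reduced norms into components indexed by the classes $[h]$, the $[h]$-component is controlled by the centralizer $C_P(h)$, since only the part of $\hat{\ent}_p P$ commuting with $h$ survives in the corresponding projection; and its effect on the $i$-th center is the determinant over $K_i$ of the action of an element $g\in C_P(h)$ on the fixed submodule $V_i^h$, viewed as a $K_iC_P(h)$-module. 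This is exactly $\psi_h(g)=(\dtr_{K_i}(g,V_i^h))_{i>1}$, so the image of $K_1(\hat{\ent}_p P)$ in $T$ is $\langle\text{Im}\,\psi_h\mid h\in P\rangle$, and the cokernel is $T/\langle\text{Im}\,\psi_h\mid h\in P\rangle$. The hard part is precisely this step: matching the abstract image of the reduced-norm--logarithm composite with the concrete determinants $\dtr_{K_i}(g,V_i^h)$, and proving these values generate the \emph{entire} image. This demands careful bookkeeping of how the cocenter basis pairs with the reduced norm on each block, together with control of the torsion of $K_1(\hat{\ent}_p P)$ so that exactly the $p$-power-root-of-unity part recorded by $T$ remains; by contrast, the reductions in the first two steps are essentially formal once the localization sequence and the logarithmic machinery of \cite{bob} are granted.
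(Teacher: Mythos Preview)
The paper does not prove this statement at all: Theorem~\ref{elbueno} is quoted verbatim from Oliver's book \cite{bob} as a preliminary tool (``Theorem 9.5 in \cite{bob}''), with no proof or sketch supplied. There is therefore no ``paper's own proof'' against which to compare your proposal.

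As an independent outline of how the result is established in \cite{bob}, your sketch is in the right spirit: the theorem there does arise from combining the localization sequence for $Cl_1$ with Oliver's integral $p$-adic logarithm, and the identification of the relevant image with the subgroup generated by the determinants $\det_{K_i}(g,V_i^h)$ is indeed the substantive step. Your description of the first two reductions is accurate given the earlier chapters of \cite{bob}. The third step, however, is where you are vaguest and where the real work lies: you assert that the $[h]$-component of the logarithm-reduced-norm composite ``is controlled by $C_P(h)$'' and that its effect on the $i$-th block is exactly $\det_{K_i}(g,V_i^h)$, but you do not indicate how this identification is actually carried out, nor why these determinants suffice to generate the full image (rather than merely lying in it). In Oliver's treatment this requires a careful induction-theoretic argument and explicit manipulation of the logarithm on induced representations; your proposal gestures at the ingredients but does not supply the mechanism. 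If you intend to include a proof, that is the gap to fill.
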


\begin{obs}\label{lapsiab}
When $p$ is odd and $P$ is abelian, we can write $\rac P=\prod_{i=1}^kK_i$, where the $K_i$ are as before, and consider the characters $\chi_i: P\rightarrow \langle \zeta_{r_i}\rangle$. Then for each $h\in P$, the function $\psi_h:P\rightarrow T$ takes the form
\begin{displaymath}
\psi_h(g)=(\bar{\psi}_{i,h}(g))_{i>1} \textrm{ where }\, \bar{\psi}_{i,h}(g)=\left\{
\begin{array}{lc}
\chi_i(g) & \textrm{if } \chi_i(h)=1\\
1 & \textrm{if } \chi_i(h)\neq 1
\end{array} \right. .
\end{displaymath}
Actually, in this case, we can consider a function $\psi:P\times P\rightarrow T$ that sends a pair $(h,g)$ to $\psi_h(g)$. 
\end{obs}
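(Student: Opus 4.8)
The plan is to unwind Definition~\ref{lapsi} directly in the abelian setting, where every module involved becomes one-dimensional over its coefficient field. First I would record the structural simplification: since $P$ is abelian, the group algebra $\rac P$ is commutative, so each simple component $A_i$ is itself a field, forcing $A_i=K_i$ and the simple module $V_i$ to be one-dimensional over $K_i$. Consequently $P$ acts on $V_i\cong K_i$ through a single linear character $\chi_i:P\rightarrow \langle\zeta_{r_i}\rangle$, the values landing in $\langle\zeta_{r_i}\rangle$ because $P$ is a $p$-group acting on $K_i=\rac(\zeta_{r_i})$. This reduction, that each simple component is one-dimensional as a module over its center, is exactly where the abelian hypothesis is used, and it is routine.

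Second, I would note that $C_P(h)=P$ for every $h$, so each $\psi_h$ is genuinely a map $P\rightarrow T$; this uniformity of the domain is what will later allow the assembly into a single map $\psi:P\times P\rightarrow T$.

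The heart of the argument is the computation of the fixed-point module $V_i^h$ and of the determinant of $g$ on it. Since $h$ acts on $V_i$ by multiplication by $\chi_i(h)$, I would split into two cases. If $\chi_i(h)=1$ then $h$ acts as the identity, so $V_i^h=V_i$ is a one-dimensional $K_i$-space on which $g$ acts by multiplication by $\chi_i(g)$; the $K_i$-determinant of this action is therefore $\chi_i(g)$. If $\chi_i(h)\neq 1$ then multiplication by $\chi_i(h)$ has no nonzero fixed vector, so $V_i^h=0$ and the determinant of the action of $g$ on the zero module is the empty product, namely $1$. Combining the two cases yields precisely $\bar\psi_{i,h}(g)=\chi_i(g)$ when $\chi_i(h)=1$ and $\bar\psi_{i,h}(g)=1$ otherwise, which is the asserted formula for $\psi_h(g)=(\bar\psi_{i,h}(g))_{i>1}$.

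Finally, because the domain $C_P(h)=P$ does not depend on $h$, the assignment $(h,g)\mapsto\psi_h(g)$ is defined on all of $P\times P$, giving the claimed function $\psi:P\times P\rightarrow T$. The only point requiring a word of care is the convention that the determinant of an endomorphism of the zero module equals $1$; beyond that the statement is a direct reading of the definitions, so I do not anticipate a genuine obstacle.
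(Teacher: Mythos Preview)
Your argument is correct and is precisely the intended unwinding of Definition~\ref{lapsi} in the abelian case; the paper itself states this observation without proof, treating it as immediate from the definitions, and your write-up supplies exactly those routine verifications (one-dimensionality of $V_i$ over $K_i$, the dichotomy for $V_i^h$, and the convention $\det=1$ on the zero module).
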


\subsection{Genetic bases of $p$-groups}

After the reformulation of Roquette's Theorem, Bouc shows (Lemma 9.4.3 in \cite{bouc}) that if  $S\trianglelefteqslant T\leqslant P$ is one of the sections appearing in the theorem, then one has $T=N_P(S)$. With this, we have the following definition.

\begin{defi}
A subgroup $S$ of a  $p$-group $P$ is called \textbf{genetic} if the section $S\trianglelefteqslant N_P(S)\leqslant P$ satisfies points 2 and 3 of Theorem \ref{broq}, where $W$ is the unique faithful irreducible  $\rac$-module of $N_P(S)/S$  and  $V=\textnormal{Ind}_{\,N_P(S)}^{\,P}\textnormal{Inf}_{\,N_P(S)/S}^{\,N_P(S)}W$. \end{defi}




\begin{nota}
Let $P$ be a $p$-group an $S$ be a genetic subgroup of $P$. If $\Phi_{N_P(S)/S}$ denotes the unique irreducible faithful rational representation of $N_P(S)/S$, then we write
\begin{displaymath}
V(S)=\ind_{N_P(S)}^P\ifl_{N_P(S)/S}^{N_P(S)}\Phi_{N_P(S)/S}.
\end{displaymath}
\end{nota}

The following theorem characterizes the genetic subgroups of a $p$-group.

\begin{teo}[Theorem 9.5.6 in \cite{bouc}]\label{cargen}
Let $P$ be a $p$-group and $S$ a subgroup of $P$ such that $N_P(S)/S$ is cyclic if $p$ is odd, or also dihedral, semi-dihedral or generalized quaternion if $p=2$. Then the following conditions are equivalent:
\begin{enumerate}
\item The subgroup $S$ is a genetic subgroup of $P$.
\item If $x\in P$ is such that $^xS\cap N_P(S)\leqslant S$, then $^xS=S$.
\item If $x\in P$ is such that $^xS\cap N_P(S)\leqslant S$ and $S^x\cap N_P(S)\leqslant S$, then $^xS=S$.
\end{enumerate}
\end{teo}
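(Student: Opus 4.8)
The plan is to reduce statement (1), that $S$ is genetic, to a computation of the endomorphism algebra of the induced module $V(S)=\ind_{N_P(S)}^P\ifl_{N_P(S)/S}^{N_P(S)}\Phi_{N_P(S)/S}$, and then to translate the resulting condition into the group-theoretic statements (2) and (3). Throughout I write $T=N_P(S)$ and let $W=\Phi_{T/S}$ be the unique faithful irreducible $\rac(T/S)$-module; for $p$ odd the group $T/S$ is cyclic of order $p^r$, so $\mathrm{End}_{\rac(T/S)}W\cong\rac(\zeta_r)$ is a field. By definition $S$ is genetic exactly when $\mathrm{End}_{\rac P}V(S)\cong\mathrm{End}_{\rac(T/S)}W$; since $\rac P$ is semisimple and the right-hand side is a field, this holds if and only if $V(S)$ is simple with $\mathrm{End}_{\rac P}V(S)\cong\rac(\zeta_r)$. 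Setting $U=\ifl_{T/S}^{T}W$, I would compute this algebra by combining Frobenius reciprocity with the Mackey formula for $\res_T^P\ind_T^P$, obtaining a decomposition indexed by the double cosets $T\backslash P/T$,
\begin{displaymath}
\mathrm{End}_{\rac P}V(S)\cong\bigoplus_{x\in[T\backslash P/T]}\mathrm{Hom}_{\rac A}\big(\res_{A}^{T}U,\ \res_{A}^{{}^xT}{}^xU\big),\qquad A=T\cap{}^xT,
\end{displaymath}
in which the term $x=1$ contributes precisely $\mathrm{End}_{\rac(T/S)}W\cong\rac(\zeta_r)$. Hence $S$ is genetic if and only if every term with $x\notin T$ vanishes.

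Next I would identify when a single off-diagonal term vanishes. The restriction $\res_A^T U$ is inflated from $A/(A\cap S)$ and faithful there, while $\res_A^{{}^xT}{}^xU$ is inflated from $A/(A\cap{}^xS)$ and faithful there; using $S\leqslant T$ and ${}^xS\leqslant{}^xT$ one checks $A\cap S={}^xT\cap S$ and $A\cap{}^xS=T\cap{}^xS$. For $p$ odd these cyclic quotients have a unique faithful irreducible $\rac$-module, so each restriction is isotypic, and the two share a constituent exactly when their kernels coincide. Writing $B_x={}^xT\cap S$ and $C_x=T\cap{}^xS$, the term for $x$ is therefore nonzero if and only if $B_x=C_x$. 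Consequently $S$ is genetic if and only if
\begin{displaymath}
(\ast)\qquad {}^xT\cap S\ \neq\ T\cap{}^xS\quad\text{for every }x\in P\setminus T .
\end{displaymath}

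It then remains to show that $(\ast)$ is equivalent to (2) and to (3). The implications $(2)\Rightarrow(3)$ and $(2)\Rightarrow(\ast)$ are immediate, the latter because $B_x\leqslant S$ always, so $C_x\not\leqslant S$ forces $B_x\neq C_x$. For $(\ast)\Leftrightarrow(3)$ I would use conjugation symmetry: conjugating by $x^{-1}$ gives ${}^{x^{-1}}B_x=C_{x^{-1}}$ and ${}^{x^{-1}}C_x=B_{x^{-1}}$, whence $|B_x|=|C_{x^{-1}}|$ and $|C_x|=|B_{x^{-1}}|$. Together with the elementary equivalence $C_x\leqslant S\Leftrightarrow C_x\leqslant B_x$, a short order comparison shows that $B_x=C_x$ for some $x\notin T$ forces both $C_x\leqslant S$ and $C_{x^{-1}}\leqslant S$, which is exactly the failure of (3); the converse is the same computation read backwards.

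The one genuinely delicate point, and the step I expect to be the main obstacle, is $(3)\Rightarrow(2)$ (equivalently $(\ast)\Rightarrow(2)$): I must rule out the asymmetric configuration in which $T\cap{}^xS\leqslant S$, so $C_x<B_x$, yet $x\notin T$. This reduces to the purely group-theoretic lemma that, for $T=N_P(S)$ with $T/S$ a Roquette group, $T\cap{}^xS\leqslant S$ implies ${}^xT\cap S\leqslant{}^xS$ — a statement symmetric in $S$ and ${}^xS$ that is false for arbitrary sections and must exploit both that $T$ is the full normalizer and that $T/S$ has cyclic (or, for $p=2$, dihedral, semi-dihedral or generalized quaternion) type. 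I would attack it by a minimal-counterexample argument: given $z\in({}^xT\cap S)\setminus{}^xS$, the element $z$ normalizes ${}^xS$ and has nontrivial image in the cyclic group $N_P({}^xS)/{}^xS$, and exploiting the uniqueness of its order-$p$ subgroup together with $z\in S\trianglelefteqslant T$, one aims to produce an element of $T\cap{}^xS$ lying outside $S$, contradicting the hypothesis. Finally, the case $p=2$ follows the same outline once uniqueness of the faithful irreducible $\rac$-module is invoked for the dihedral, semi-dihedral and generalized quaternion quotients; the only extra care is that restrictions of these modules need not be isotypic, so the kernel-matching criterion in the off-diagonal step must be argued constituent by constituent.
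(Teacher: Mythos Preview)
The paper does not prove this theorem; it is quoted without argument as Theorem~9.5.6 of \cite{bouc}, among the preliminary results imported from Bouc's book. There is therefore no proof in the paper to compare your attempt against.

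That said, your outline is the natural route and is essentially the one taken in \cite{bouc}: decompose $\mathrm{End}_{\rac P}V(S)$ via Frobenius reciprocity and the Mackey formula, identify the summand for the trivial double coset with $\mathrm{End}_{\rac(T/S)}W$, and translate vanishing of the remaining summands into the group-theoretic conditions. Your reduction of (1) to the kernel-matching criterion $(\ast)$ and the equivalence $(\ast)\Leftrightarrow(3)$ are correct as written. The implication you yourself flag as the obstacle, $(3)\Rightarrow(2)$, is genuinely the nontrivial step, and your sketch does not close it; it requires a separate argument exploiting that $T=N_P(S)$ and the structure of $T/S$, which in \cite{bouc} is handled independently. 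One small correction: when $p=2$ and $T/S$ is generalized quaternion, $\mathrm{End}_{\rac(T/S)}W$ is the rational quaternion algebra, not a field, so ``the right-hand side is a field'' should read ``division ring''; nothing else in your reduction depends on commutativity.
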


The next result gives us a necessary and sufficient condition for two genetic subgroups $S$ and $T$ of $P$ to produce isomorphic representations, $V(S)\cong V(T)$, it is part of Theorem 9.6.1 in \cite{bouc}.

\begin{teo}\label{cariso}
Let $P$ be a $p$-group and $S$ and $T$ be genetic subgroups of $P$. The following conditions are equivalent:
\begin{enumerate}
\item The $\rac P$-modules $V(S)$ and $V(T)$ are isomorphic.
\item There exists $x\in P$ such that $^xT\cap N_P(S)\leqslant S$ and $S^x\cap N_P(T)\leqslant T$.
\end{enumerate}
If these conditions hold, then in particular the groups $N_P(S)/S$ and $N_P(T)/T$ are isomorphic.
\end{teo}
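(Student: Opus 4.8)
The plan is to reduce the statement to a computation of the space of $\rac P$-homomorphisms between $V(S)$ and $V(T)$. Write $N_S=N_P(S)$ and $N_T=N_P(T)$, and set $W_S=\ifl_{N_S/S}^{N_S}\Phi_{N_S/S}$ and $W_T=\ifl_{N_T/T}^{N_T}\Phi_{N_T/T}$, so that $V(S)=\ind_{N_S}^P W_S$ and $V(T)=\ind_{N_T}^P W_T$. First I would record that both modules are \emph{simple}: since $S$ is genetic, point 2 of Theorem \ref{broq} gives $\textnormal{End}_{\rac P}V(S)\cong\textnormal{End}_{\rac(N_S/S)}\Phi_{N_S/S}\cong\rac(\zeta_{r_S})$, a field, and likewise for $V(T)$; as $\rac P$ is semisimple, a module whose endomorphism algebra is a field (hence has no nontrivial idempotents) must be simple. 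Consequently $V(S)\cong V(T)$ if and only if $\textnormal{Hom}_{\rac P}(V(S),V(T))\neq 0$, and the whole theorem becomes the computation of this Hom-space.

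Next I would apply Mackey's formula. By Frobenius reciprocity, $\textnormal{Hom}_{\rac P}(\ind_{N_S}^P W_S,\ind_{N_T}^P W_T)\cong\textnormal{Hom}_{\rac N_S}(W_S,\res_{N_S}^P\ind_{N_T}^P W_T)$, and Mackey's theorem decomposes $\res_{N_S}^P\ind_{N_T}^P W_T$ as a direct sum over the double cosets $N_S\backslash P/N_T$. A second application of Frobenius reciprocity identifies the summand attached to a representative $x$ with
\[\textnormal{Hom}_{\rac H_x}\!\big(\res_{H_x}W_S,\ \res_{H_x}{}^{x}W_T\big),\qquad H_x=N_S\cap {}^{x}N_T.\]
Thus $\textnormal{Hom}_{\rac P}(V(S),V(T))\neq 0$ precisely when some double coset contributes a nonzero term. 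At this point I would also check that whether a given $x$ contributes depends only on its double coset $N_Sx N_T$, which reduces to the normality of $S$ in $N_S$ and of $T$ in $N_T$.

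The heart of the argument is to decide when a single term vanishes, and for $p$ odd the relevant sections are cyclic, which I would exploit as follows. Since $\Phi_{N_S/S}$ is the faithful rational representation of the cyclic group $N_S/S$, its restriction to the image of $H_x$ is a multiple of the \emph{unique} faithful rational representation of that cyclic image; inflating back, $\res_{H_x}W_S$ is a multiple of a single irreducible $\rac H_x$-module with kernel $H_x\cap S=S\cap {}^{x}N_T$. Likewise $\res_{H_x}{}^{x}W_T$ is a multiple of a single irreducible with kernel $H_x\cap {}^{x}T=N_S\cap {}^{x}T$. Two such modules admit a nonzero homomorphism if and only if these irreducibles coincide, and by uniqueness of the faithful rational representation of a cyclic $p$-group this happens exactly when the kernels agree, i.e. $S\cap {}^{x}N_T=N_S\cap {}^{x}T$. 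Finally I would verify the elementary group-theoretic identity that this kernel equality for $x$ is equivalent to the two containments ${}^{x}T\cap N_S\leq S$ and $S^{x}\cap N_T\leq T$ of condition (2): conjugating each containment yields one inclusion between $S\cap{}^{x}N_T$ and $N_S\cap{}^{x}T$, and conversely. Combined with double-coset invariance this gives the equivalence of (1) and (2). The ``in particular'' clause then follows by comparing endomorphism algebras: $V(S)\cong V(T)$ forces $\rac(\zeta_{r_S})\cong\rac(\zeta_{r_T})$, hence $r_S=r_T$, so the cyclic groups $N_S/S$ and $N_T/T$ have equal order and are isomorphic.

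The step I expect to be the main obstacle is the analysis of a single Mackey term in \emph{full} generality, namely for $p=2$. There the sections $N_S/S$ may be dihedral, semi-dihedral or generalized quaternion, and although each still has a unique faithful irreducible rational representation, its restriction to a subgroup is no longer a multiple of a single irreducible, so the clean ``kernels agree'' criterion breaks down. These cases require a finer study of how the faithful representation decomposes on restriction, and it is here that the full strength of the genetic characterization (Theorem \ref{cargen}) and of condition (2) is needed to decide nonvanishing. Even in the odd case one must argue consistently over $\rac$ rather than $\com$, tracking Galois orbits of characters throughout the restriction computations.
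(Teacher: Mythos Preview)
The paper does not supply a proof of this theorem: it is stated in the Preliminaries section and attributed to Bouc, with the remark ``it is part of Theorem 9.6.1 in \cite{bouc}''. So there is no argument in the paper against which to compare your proposal.

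That said, your approach is the natural one and is essentially the strategy Bouc uses: reduce $V(S)\cong V(T)$ to the nonvanishing of $\textnormal{Hom}_{\rac P}(V(S),V(T))$ via simplicity, then analyse each Mackey term. Two small remarks. First, your simplicity argument should say ``division ring'' rather than ``field'': for $p=2$ and $N_S/S$ generalized quaternion, $\textnormal{End}_{\rac(N_S/S)}\Phi_{N_S/S}$ is a rational quaternion algebra, not a field; but a module over a semisimple ring whose endomorphism ring is a division ring is still simple, so the conclusion stands. Second, for odd $p$ your kernel computation is clean and the equivalence between ``$S\cap{}^{x}N_T=N_S\cap{}^{x}T$'' and the two containments of condition (2) is exactly right. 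You have correctly identified that the only genuine difficulty lies at $p=2$, where the restriction of $\Phi_{N_S/S}$ to a subgroup is no longer isotypic; Bouc handles that case by a separate analysis of the Roquette groups, and the argument there is indeed more delicate than what you have written for odd $p$.
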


The relation between groups appearing in point 2 is denoted by $S\bizlie{P}T$. The theorem shows that this relation is an equivalence relation on the set of genetic subgroups of $P$, and we have the following definition.

\begin{defi}[Definition 9.6.11 in \cite{bouc}]
A \textbf{genetic basis} of $P$ is a set of representatives of the equivalence classes of $\bizlie{P}$ in the set of genetic subgroups of $P$.
\end{defi}

\begin{obs}
Suppose that $p$ is odd and that $\{S_1,\ldots ,S_k\}$ is a genetic basis for $P$ with $S_1=P$. Then Theorem \ref{elbueno} says that $Cl_1(P)$ is isomorphic to a quotient of $\prod_{i=2}^k N_P(S_i)/S_i$. 
This is because we can see the module $\Phi_{N_P(S_i)/S_i}\cong \rac (\zeta_{r_i})$, where $r_i=|N_P(S_i)/S_i|$ as actually being generated by a generator of $N_P(S_i)/S_i$ and thus the action of $N_P(S_i)/S_i$ on it can be seen as multiplication on the group.
\end{obs}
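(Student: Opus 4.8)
The plan is to show that the group $T$ of Definition \ref{lapsi} and Theorem \ref{elbueno} is itself isomorphic to $\prod_{i=2}^k N_P(S_i)/S_i$; the assertion then follows immediately, since Theorem \ref{elbueno} already exhibits $Cl_1(\ent P)$ as a quotient of $T$. First I would set up the correspondence between the chosen genetic basis and the Wedderburn factors of $\rac P=\prod_{i=1}^kA_i$. By Roquette's Theorem \ref{broq}, together with Bouc's identification $T=N_P(S)$ for the sections it produces, every simple $\rac P$-module is isomorphic to $V(S)$ for some genetic subgroup $S$ of $P$. By Theorem \ref{cariso}, two genetic subgroups yield isomorphic modules precisely when they are $\bizlie{P}$-equivalent, so a genetic basis $\{S_1,\ldots,S_k\}$ provides exactly one genetic subgroup per isomorphism class of simple modules. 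Reindexing the factors $A_i$ if necessary, I may assume $V_i\cong V(S_i)$ for every $i$. Since $S_1=P$ gives $N_P(S_1)/S_1=1$ and hence $V(S_1)=\rac$, the trivial module, this reindexing is consistent with the convention of Definition \ref{lapsi} that $V_1$ is trivial; in particular the factor $\langle\zeta_{r_1}\rangle$ is trivial, which is why $T$ ranges only over $i\geq 2$.

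Next, for each $i\geq 2$ I would identify the roots-of-unity group $\langle\zeta_{r_i}\rangle$ with the quotient $N_P(S_i)/S_i$. Because $p$ is odd, $N_P(S_i)/S_i$ is cyclic, say of order $p^{r_i}$, and its unique faithful irreducible rational representation $\Phi_{N_P(S_i)/S_i}$ is, as recalled after Theorem \ref{broq}, the field $\rac(\zeta_{r_i})$ on which a chosen generator of $N_P(S_i)/S_i$ acts by multiplication by the primitive $p^{r_i}$-th root of unity $\zeta_{r_i}$. Point 2 of Theorem \ref{broq}, which is built into the definition of a genetic subgroup, supplies the $\rac$-algebra isomorphism $\textnormal{End}_{\rac P}V_i\cong\textnormal{End}_{\rac(N_P(S_i)/S_i)}\Phi_{N_P(S_i)/S_i}$, whose source is the center $K_i\cong\rac(\zeta_{r_i})$ and whose target is again $\rac(\zeta_{r_i})$. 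Tracking the image of the group under this faithful multiplication action, the map sending a generator of $N_P(S_i)/S_i$ to $\zeta_{r_i}$ is an isomorphism of groups $N_P(S_i)/S_i\xrightarrow{\sim}\langle\zeta_{r_i}\rangle$.

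Finally I would assemble these isomorphisms into $T=\prod_{i=2}^k\langle\zeta_{r_i}\rangle\cong\prod_{i=2}^kN_P(S_i)/S_i$ and combine this with Theorem \ref{elbueno} to conclude that $Cl_1(\ent P)\cong T/\langle\textnormal{Im}\,\psi_h\mid h\in P\rangle$ is a quotient of $\prod_{i=2}^kN_P(S_i)/S_i$. I expect the only delicate point to be the first step, namely verifying that the genetic basis is genuinely in bijection with the isomorphism classes of simple $\rac P$-modules, so that the finite product over the basis matches exactly the finite product defining $T$; the remaining identifications are essentially the explicit description of the faithful rational representation of a cyclic $p$-group already recorded in the excerpt after Theorem \ref{broq}.
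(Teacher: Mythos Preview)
Your argument is correct and matches the paper's own justification, which is embedded in the observation itself: the paper simply notes that the cyclic group $N_P(S_i)/S_i$ acts faithfully on $\Phi_{N_P(S_i)/S_i}\cong\rac(\zeta_{r_i})$ by multiplication by roots of unity, giving an identification $N_P(S_i)/S_i\cong\langle\zeta_{r_i}\rangle$. You spell out more carefully the preliminary step---that a genetic basis is in bijection with the isomorphism classes of simple $\rac P$-modules, so the index sets for $T$ and for $\prod_{i\geq 2}N_P(S_i)/S_i$ agree---which the paper leaves implicit in the surrounding definitions and theorems, but the core identification is the same.
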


\begin{obs}
If $P$ is an abelian group, a genetic subgroup of $P$ is a subgroup with cyclic quotient, and the relation $\bizlie{P}$ becomes equality. So, in this case there is a unique genetic basis $\mathcal{S}$, consisting of the subgroups of $P$ with cyclic quotient. It is not hard to see that the genetic subgroups of $P$ in this case can be obtained as the kernels of the irreducible complex representations of $P$.

In this case, if $p$ is odd and $\{S_1,\ldots S_k\}$ is the genetic basis for $P$, with $S_1=P$, then with the help of Observation \ref{lapsiab}, Theorem \ref{elbueno} can be reformulated in the following way: Take $T=\prod_{i=2}^kP/S_i$, and 
for each $h\in P$ define the group homomorphism $\psi_h:P\rightarrow T$
\begin{displaymath}
\psi_h(g)=(\bar{\psi}_{h,\,i}(g))_{i>1} \textrm{ where }\, \bar{\psi}_{h,\,i}(g)=\left\{
\begin{array}{lc}
gS_i & \textrm{if } h\in S_i\\
1 & \textrm{if } h\notin S_i
\end{array} \right. .
\end{displaymath}
Then Theorem \ref{elbueno} says that $Cl_1(P)$ is isomorphic to $T/\langle\textnormal{Im}\psi_{h}\mid h\in P\rangle$.
\end{obs}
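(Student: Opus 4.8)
The plan is to specialise the machinery of this subsection to the abelian case, where every subgroup is normal and conjugation is trivial, and then to rewrite Observation \ref{lapsiab} under the identifications dictated by the modules $V(S_i)$.

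First I would prove the characterisation of genetic subgroups together with the uniqueness of the basis. Since $P$ is abelian, $N_P(S)=P$ for every subgroup $S$, so for $p$ odd the standing hypothesis of Theorem \ref{cargen} reduces to requiring that $P/S$ be cyclic. Given such an $S$, every $x\in P$ satisfies $^xS=S$, so condition (2) of Theorem \ref{cargen} holds vacuously and $S$ is genetic; conversely the definition forces $N_P(S)/S=P/S$ to be cyclic, so there are no other genetic subgroups. For the relation $\bizlie{P}$, the conditions $^xT\cap N_P(S)\leqslant S$ and $S^x\cap N_P(T)\leqslant T$ of Theorem \ref{cariso} collapse, using $^xT=T$, $S^x=S$ and $N_P(S)=N_P(T)=P$, to $T\leqslant S$ and $S\leqslant T$, that is, $S=T$. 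Hence $S\bizlie{P}T$ if and only if $S=T$, each equivalence class is a singleton, and the unique genetic basis $\mathcal{S}$ consists of all subgroups with cyclic quotient. For the last assertion of this part, the irreducible complex representations of $P$ are the characters $\chi\colon P\to\com^\times$; a finite subgroup of $\com^\times$ is cyclic, so $\ker\chi$ has cyclic quotient and is therefore genetic, while conversely any $S$ with $P/S$ cyclic is the kernel of a character obtained by pulling back a faithful character of $P/S$ along $P\to P/S$. Thus the genetic subgroups are precisely the kernels of the irreducible complex representations.

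Next I would establish the reformulation. By Theorem \ref{broq} every simple $\rac P$-module is of the form $V(S)$ for some genetic $S$, and by Theorem \ref{cariso} the members of a genetic basis give pairwise non-isomorphic modules; hence $S_i\mapsto V(S_i)$ is a bijection from $\mathcal{S}$ onto a complete set of simple $\rac P$-modules. For $P$ abelian one has $V(S_i)=\ifl_{P/S_i}^P\Phi_{P/S_i}$, so matching this list with the factors of Definition \ref{lapsi} gives $K_i\cong\rac(\zeta_{r_i})$ with $p^{r_i}=|P/S_i|$, where $P$ acts on $V_i$ through the faithful character $\chi_i$ of $P/S_i$ with $S_i=\ker\chi_i$; the trivial module $V_1$ corresponds to $S_1=P$, so restricting to $i\geq 2$ is exactly dropping it. Under the isomorphism $\chi_i\colon P/S_i\xrightarrow{\sim}\langle\zeta_{r_i}\rangle$ one has $\chi_i(g)\leftrightarrow gS_i$ and $\chi_i(h)=1\iff h\in S_i$. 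Substituting these into the formula of Observation \ref{lapsiab} turns $\bar{\psi}_{i,h}(g)$ into $gS_i$ when $h\in S_i$ and into $1$ when $h\notin S_i$, so with $T=\prod_{i=2}^k\langle\zeta_{r_i}\rangle\cong\prod_{i=2}^k P/S_i$ the homomorphism $\psi_h$ is exactly the one in the statement. Theorem \ref{elbueno} then yields $Cl_1(\ent P)\cong T/\langle\textnormal{Im}\,\psi_h\mid h\in P\rangle$.

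The argument is a chain of specialisations, so I do not expect a genuine obstacle; the only point requiring care is the bookkeeping that links the two observations. Concretely, one must confirm that the simple module $V_i$ with endomorphism field $K_i\cong\rac(\zeta_{r_i})$ and character $\chi_i$ appearing in Observation \ref{lapsiab} is exactly $V(S_i)$ for the genetic subgroup $S_i=\ker\chi_i$, so that the dichotomy $\chi_i(h)=1$ versus $\chi_i(h)\neq 1$ becomes $h\in S_i$ versus $h\notin S_i$, and the value $\chi_i(g)$ becomes the class $gS_i$. This rests on the determinant computation behind Observation \ref{lapsiab}, namely that $V_i^h=V_i$ with $g$ acting as $\chi_i(g)$ when $h\in S_i$ and $V_i^h=0$ otherwise; once this identification is fixed, the two formulas for $\psi_h$ agree verbatim.
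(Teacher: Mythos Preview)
Your proposal is correct and matches the paper's intended reasoning. The paper presents this statement as an observation without a formal proof, merely indicating that it follows ``with the help of Observation \ref{lapsiab}'' and that the assertions are ``not hard to see''; your argument spells out precisely those implicit steps---using Theorems \ref{cargen} and \ref{cariso} to reduce both the genetic condition and the relation $\bizlie{P}$ in the abelian case, and then transporting Observation \ref{lapsiab} across the isomorphism $\chi_i\colon P/S_i\to\langle\zeta_{r_i}\rangle$---so there is nothing to contrast.
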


\section{Genetic bases of two metacyclic groups}

Given a finite metacyclic group $G$ with a presentation
\begin{equation*}\label{pres}
G=\langle x,y\mid x^m=1,\, y^s=x^t,\, y^{-1}xy=x^r\rangle
\end{equation*}
where the integers $m$, $s$, $t$ and $r$ satisfy $r^s\equiv 1\ (\textsl{mod}\ m)$ and $m|t(r-1)$, one can find the cardinality of a genetic basis of $G$ in the following way: Basmaji in \cite{bas} describes all the non-isomorphic irreducible complex representations of $G$, let us denote them by Irr$\, G$. Next, if $G$ has exponent $n$ and $\varepsilon$ is a primitive $n$-th root of unity, then $Gal(n)$, the Galois group of the extension $\rac(\varepsilon):\rac$, acts on Irr$\,G$, 
and the number of non-isomorphic irreducible $\rac$-representations of $G$ is  the number of orbits of this action (section 70 of Curtis and Reiner \cite{curtis1}). This method can be easily used for any of our two examples, but of course, for the first example there are other methods.  

The following descriptions work for any prime $p$, but remember that in the next section we will only be considering $p$ odd.

\subsection{$C_{p^n}\times C_{p^n}$}

Suppose $n$ and $m$ are positive integers with $m\geq n$. The size of the genetic basis of $C_{p^n}\times C_{p^m}$ is equal to the number of subgroups with cyclic quotient, which is also, by duality of abelian groups, equal to the number of cyclic subgroups. This number can be found, for example, using Goursat's Lemma (Theorem 2.3 in Sim \cite{sim}), it is equal to
\begin{displaymath}
p^{n}(m-n+1)+2\frac{p^n-1}{p-1}.
\end{displaymath}
Actually, this number is equal to the sum $1+(p+1)\sum_{i=0}^{n-1}p^i+p^n(m-n)$, which tells us how many groups of each index we will have in the genetic basis. Indeed, in the genetic basis for this group we will have of course 1 group of index 1, $p+1$ groups of index $p$, then $(p+1)p^i$ groups of index $p^{i+1}$ for every $1\leq i\leq n-1$ and then $p^n$ groups of index $p^j$ for every $j$ between $n$ and $m$. Clearly, any group of index bigger than $p^m$ has non-cyclic quotient so these are all the possibilities we have for the indices.  Since we are interested in the case $m=n$, we will give a precise description of the genetic basis  for this case.  This description can also be obtained from Lemme 2 (Chapitre VII) in Serre \cite{arith}.

\newpage

For the rest of this section, we fix $P$ as being the group $C_{p^n}\times C_{p^n}$. Suppose $P$ has generators $a$ and $b$, that is $P=\langle a\rangle \times \langle b\rangle$ and let $\mathcal{S}$ be the genetic basis of $P$. Then  $\las$ consists of the following subgroups:
\begin{displaymath}
\begin{array}{cccc}
 & P &  & \\
\langle a, b^p\rangle & \langle a^{i}b, a^p\rangle & \langle b, a^p\rangle & 1\leq i\leq p-1\\
\langle ab^{jp}, b^{p^2}\rangle & \langle a^{jp+i}b, a^{p^2}\rangle & \langle a^{jp}b, a^{p^2}\rangle & 0\leq j\leq p-1\\
 & \vdots & & \vdots\\
\langle ab^{kp^{n-1}+\ldots +jp}\rangle & \langle a^{kp^{n-1}+\ldots + i}b\rangle & \langle a^{kp^{n-1}+\dots +jp}b\rangle & 0\leq k\leq p-1. 
\end{array}
\end{displaymath}
These are all distinct subgroups, they have all cyclic quotient in $P$ and we have $1+(p+1)\sum_{i=0}^{n-1}p^i$ of them, so this is the genetic basis of $P$. That is, a group in $\las$ of index $p^s$ for $0\leq s\leq n$, is given by $\langle a^{\nu}b, a^{p^s}\rangle$, where $\nu$ is in the set of smallest non-negative representatives of $\ent/p^s\ent$, or by $\langle ab^{\mu}, b^{p^s}\rangle$ where $1\leq s\leq n$ and $\mu$ is in the same set as $\nu$ but it is a multiple of $p$. We write $\mu$ and $\nu$ in base $p$ expansion since this helps us to keep track of the inclusions between subgroups. Notice that the only cyclic groups of the basis are those of index $p^n$.

 The following diagrams illustrate how the genetic bases look like for the example $p=3$ and $n=1,\,2$ and $3$. Let $\las_n$ be the genetic basis of $P$ with these values. The column on the left indicates the index of the groups and how many of them are in $\las_n$. Black dots are the groups in $\las_n$ and white circles indicate the rest of groups of the given index. Notice that the diagrams are symmetric with respect to the line of groups of order (and index) $p^n$.

$n=1:$
\begin{displaymath}
\xymatrix@C=0.4pt@R=33pt{
\scriptstyle{1-1}& & & & & & & \bullet\boitext{P} & & & & & & \\
\scriptstyle{3-4}& \bullet\boitext{\,\langle a\rangle}\xar[urrrrrr]\xar[drrrrrr]& & & &  \bullet\boitext{\langle a^2b\rangle}\xar[urr]\xar[drr]&  & & &\bullet\boitext{\langle ab\rangle}\xar[ull]\xar[dll]& &  & &\bullet\boitext{\langle b\rangle}\xar[ullllll]\xar[dllllll]\\
& & & & & & & \circ\boitext{\{1\}}& & & & & &  
}
\end{displaymath} 
 
$n=2:$
\begin{displaymath}
\xymatrix@C=0.4pt@R=33pt{
\scriptstyle{1-1}& & & && & & & & \bullet\boitext{P}& & & & & & & & \\
\scriptstyle{3-4}& & &\bullet\boitext{\,\langle a,b^3\rangle}\xar[urrrrrr]& & & & \bullet\boitext{\langle a^2b, a^3\rangle}\xar[urr]& & & &\bullet\boitext{\langle ab, a^3\rangle}\xar[ull]& & & & &\bullet\boitext{\langle b,a^3\rangle}\xar[ulllllll] & & & \\
\scriptstyle{3^2-12}&\bullet\xar[urr]\xar[drr] & & \bullet\xar[u]\xar[d] & &\bullet\xar[ull]\xar[dll] &   \bullet\xar[ur]\xar[dr] & \bullet\xar[u]\xar[d] &\bullet\xar[ul]\xar[dl] & \xar[ullllll]\xar[dllllll]\xar[ull]\xar[dll]\circ\xar[urr]\xar[drr]\xar[urrrrrrr]\xar[drrrrrrr] & \bullet\xar[ur]\xar[dr]  &\bullet\xar[u]\xar[d] & \bullet\xar[ul]\xar[dl]  & \bullet\boitext{\langle a^6b\rangle}\xar[urrr]\xar[drrr] & & &\bullet\boitext{\langle a^3b\rangle}\xar[u]\xar[d]& & &  \bullet\boitext{\langle b\rangle}\xar[ulll]\xar[dlll]\\
& & &\circ\xar[drrrrrr]& & & & \circ\xar[drr]& & & &\circ\xar[dll]& & & & &\circ\boitext{\langle b^3\rangle}\xar[dlllllll] & & & \\
 & & & && & & & & \circ\boitext{\{1\}}& & & & & & & & 
}
\end{displaymath} 
 
$n=3:$ 
\begin{displaymath}
\xymatrix@C=0.4pt@R=33pt{
\scriptstyle{1-1}& & & & & & & & & & & & & &\bullet\boitext{P}& & & & & & & & & & & & & & &\\
\scriptstyle{3-4}&& & & & & \bullet\boitext{\,\langle a,b^3\rangle}\xar[urrrrrrrr]& & & & & &\bullet\boitext{\langle a^2b, a^3\rangle}\xar[urr]& & & &\bullet\boitext{\langle ab, a^3\rangle}\xar[ull]& & & & & &\bullet\boitext{\langle b,a^3\rangle}\xar[ullllllll] & & & & & & &\\
\scriptstyle{3^2-12}&&\bullet\xar[urrrr] & & & &\bullet\xar[u] & & & \bullet\xar[ulll] & &  \bullet\xar[ur] & \bullet\xar[u] &\bullet\xar[ul] & \xar[ullllllll]\xar[ull]\circ\xar[urr]\xar[urrrrrrrr] & \bullet\xar[ur]  &\bullet\xar[u] & \bullet\xar[ul]  & &\bullet\boitext{\langle a^6b, a^9\rangle}\xar[urrr] & & &\bullet\boitext{\langle a^3b,a^9\rangle}\xar[u]&&&& & & & \bullet\boitext{\langle b,a^9\rangle}\xar[ulllllll]& & \\
\scriptstyle{3^3-36}&\bullet\xar[ur]\xar[dr] &  \bullet\xar[u]\xar[d] &\bullet\xar[ul]\xar[dl]  & \bullet\xar[urr]\xar[drr] &  \bullet\xar[ur]\xar[dr] &\xar[urrr]\xar[u]\xar[drrr]\xar[d]\circ\xar[ullll]\xar[urrrrrrrr]\xar[dllll]\xar[drrrrrrrr] &\xar[ul]\xar[dl]\bullet & \bullet\xar[ur]\xar[dr] &  \bullet\xar[u]\xar[d] &\bullet\xar[ul]\xar[dl]  & & \xar[ul]\xar[u]\xar[dl]\xar[d]\circ\xar[ur]\xar[urr]\xar[dr]\xar[drr] & \ldots & &\ldots & \xar[ull]\xar[ul]\xar[u]\xar[dll]\xar[dl]\xar[d]\circ\xar[ur]\xar[dr] & & \bullet\xar[ur]\xar[dr] &  \bullet\xar[u]\xar[d] &\bullet\xar[ul]\xar[dl] &\bullet\xar[ur]\xar[dr] &\xar[ullllllll]\xar[ulll]\xar[dllllllll]\xar[dlll]\circ\xar[u]\xar[urrrrrrr]\xar[d]\xar[drrrrrrr] & \bullet\xar[ul]\xar[dl] &\bullet\xar[ull]\xar[dll] &\bullet\xar[urrrr]\xar[drrrr] \boitext{\langle a^{18}b\rangle}& & && \bullet\xar[u]\xar[d]\boitext{\langle a^9b\rangle} && &\bullet\boitext{\langle b \rangle}\xar[ulll]\xar[dlll]\\
&&\circ\xar[drrrr] & & & & \circ\xar[d] & & & \circ\xar[dlll] & &  \circ\xar[dr] & \circ\xar[d] &\circ\xar[dl] & \xar[dllllllll]\xar[dll]\circ\xar[drr]\xar[drrrrrrrr] & \circ\xar[dr]  &\circ\xar[d] & \circ\xar[dl]  & &\circ\xar[drrr] & & &\circ\xar[d]& &&& & & & \circ\xar[dlllllll]\boitext{\langle b^3\rangle}& & \\
&& & & & & \circ\xar[drrrrrrrr]& & & & & &\circ\xar[drr]& & & &\circ\xar[dll]& & & & & &\circ\xar[dllllllll]\boitext{\langle b^9\rangle} & & & & & & &\\
& & & & & & & & & & & & & &\circ\boitext{\{1\}}& & & & & & & & & & & & & & &
}
\end{displaymath}
In this case, $P$ has 4 groups of index $3$, which of course are all in $\las_3$. Each one of them has in turn 4 groups of index 3 (and so their index in $P$ is $3^2$), but among these 4 groups one is contained in all the groups of index $3$ and does not have cyclic quotient in $P$, it is $\langle a^3, b^3\rangle$, represented by the only white dot of index $3^2$. The other $3$ groups have cyclic quotient in $P$ and so we obtain $3\cdot 4=12$ groups of index $3^2$ in  $\las_3$.  A similar phenomenon occurs for these groups and we obtain $3\cdot 12=36$ groups of index $3^3$ in $\las_3$.

\subsection*{The group $M_n(p)$}

The group $M_n(p)$ has a presentation
\begin{displaymath}
\langle a,\, b\mid a^{p^{n-1}}=1=b^p,\ b^{-1}ab=a^{p^{n-2}+1}\rangle
\end{displaymath}
for $n\geq 3$.

\begin{lema}
\label{tam}
The cardinality of a genetic basis of $M_n(p)$ is $(n-2)p+3$.
\end{lema}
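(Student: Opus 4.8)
The plan is to compute the number of non-isomorphic irreducible rational representations of $M_n(p)$. Indeed, by Theorems \ref{broq} and \ref{cariso} the assignment $S\mapsto V(S)$ induces a bijection between a genetic basis of a $p$-group $P$ and the set of simple factors of $\rac P$, so the cardinality of a genetic basis of $M_n(p)$ equals the number of non-isomorphic irreducible $\rac M_n(p)$-modules. As recalled at the beginning of this section, following \cite{curtis1}, this is the number of orbits of $Gal(p^{n-1})$ acting on the set Irr$\,M_n(p)$ of irreducible complex characters; here one first checks, using that $p$ is odd and $n\geq 3$, that the exponent of $M_n(p)$ is $p^{n-1}$, so that $Gal(p^{n-1})\cong(\ent/p^{n-1}\ent)^\times$ is indeed the relevant group.

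I would then describe Irr$\,M_n(p)$, either from Basmaji's classification \cite{bas} or, more transparently, by Clifford theory with respect to the normal subgroup $\langle a\rangle\cong C_{p^{n-1}}$ of index $p$, on which $b$ acts by $a\mapsto a^{1+p^{n-2}}$. One obtains two families. First, since $[M_n(p),M_n(p)]=\langle a^{p^{n-2}}\rangle$, the abelianization is $C_{p^{n-2}}\times C_p$, giving $p^{n-1}$ linear characters, which I record as pairs $(\alpha,\beta)$ with $\alpha^{p^{n-2}}=1=\beta^p$ describing the images of $a$ and $b$. Second, a character $\chi_j$ of $\langle a\rangle$ with $\chi_j(a)=\zeta^j$, for $\zeta$ a fixed primitive $p^{n-1}$-th root of unity, is fixed by $b$ if and only if $p\mid j$; so the remaining $p^{n-1}-p^{n-2}$ characters of $\langle a\rangle$ split into $b$-orbits of size $p$ — here one uses that $1+p^{n-2}$ has order exactly $p$ in $(\ent/p^{n-1}\ent)^\times$, which requires $p$ odd — and induction produces $p^{n-2}-p^{n-3}$ irreducible characters of degree $p$. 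The identity $p^{n-1}+(p^{n-2}-p^{n-3})p^2=p^n$ confirms that these are all the irreducible characters.

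Finally I would count the Galois orbits. For a degree-$p$ character $\theta_j=\ind_{\langle a\rangle}^{M_n(p)}\chi_j$ one has $\sigma_k\cdot\theta_j=\theta_{kj}$, while $\theta_j=\theta_{j'}$ exactly when $j$ and $j'$ lie in the same coset of $H=\langle 1+p^{n-2}\rangle$ in $(\ent/p^{n-1}\ent)^\times$; the Galois action $jH\mapsto kjH$ is then transitive, so all degree-$p$ characters form a single orbit. For the linear characters, writing $p^c=\mathrm{ord}(\alpha)$ with $0\leq c\leq n-2$ and $p^d=\mathrm{ord}(\beta)$ with $d\in\{0,1\}$, the automorphism $\sigma_k$ sends $(\alpha,\beta)$ to $(\alpha^k,\beta^k)$; when $c\geq 1$ the value $\beta^k$ is already determined by $\alpha^k$, so the order type $(c,d)$ contributes one orbit when $d=0$ and $p-1$ orbits when $d=1$. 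Summing over $c$ gives $n-2$ orbits from the case $d=0,\ c\geq 1$, then $(n-2)(p-1)$ orbits from $d=1,\ c\geq 1$, one orbit for the $p-1$ characters with $\alpha=1\neq\beta$, and one orbit for the trivial character: a total of $(n-2)(p-1)+(n-2)+1+1=(n-2)p+2$. Adding the single degree-$p$ orbit yields $(n-2)p+3$.

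The step I expect to be the main obstacle is the orbit count among the linear characters: the Galois group acts on the abelianization $C_{p^{n-2}}\times C_p$ diagonally, but the two factors are not acted on independently — the action on the $C_p$-part is governed by the action on the $C_{p^{n-2}}$-part once $c\geq 1$ — so one cannot simply multiply orbit counts and must instead isolate the invariant (a ``ratio'' in $(\ent/p\ent)^\times$) that distinguishes orbits within a fixed order type. The preliminary arithmetic — that $\exp M_n(p)=p^{n-1}$ and that $1+p^{n-2}$ has order $p$ modulo $p^{n-1}$, both of which use $p$ odd — should also be dispatched carefully at the start.
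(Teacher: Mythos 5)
Your proposal is correct and follows essentially the same route as the paper: both arguments identify the size of a genetic basis with the number of irreducible rational representations and count these as Galois orbits on $\textrm{Irr}\, M_n(p)$, arriving at $2+(n-2)p+1$. The only difference is that you rederive the complex character classification via Clifford theory with respect to $\langle a\rangle$ (and organize the orbit count by order types in the abelianization), whereas the paper simply cites Basmaji's parametrization by pairs $(\zeta,\theta)$; your orbit counts match the paper's term by term.
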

\begin{proof}
By the results in \cite{bas}, we have that complex representations of $M_n(p)$ are indexed by pairs $(\zeta,\, \theta)$, where $\zeta$ runs through a set of representatives of the natural action of $r=p^{n-2}+1$ on the group of $p^{n-1}$-th roots of unity, and if $\zeta$ is a root of order $p^i$ with $0\leq i\leq n-1$ then $\theta$ runs through the $(p/s_i)$-th roots of unity, where $s_i$ is the order of $r$ modulo $p^i$. Observe that if $i<n-1$, then $r$ fixes $\zeta$ and $s_i=1$. For $i=n-1$, by Lemma 2.1 in \cite{sim}, $s_{n-1}=p$. The exponent of $M_n(p)$ is $p^{n-1}$ and the Galois group of the extension of $\rac$ by a primitive $p^{n-1}$-th root of unity is isomorphic to $(\ent /p^{n-1}\ent)^{\times}$. If $\bar{q}$ is in $(\ent/p^{n-1}\ent)^{\times}$, then it sends $(\zeta,\, \theta)$ to $(\zeta^q,\, \theta^q)$. As said at the beginning of the section, the cardinality of a genetic basis is equal to the number of orbits of this action, which can now be easily computed: if $\zeta=1$ then we obtain two orbits, for any other $\zeta$ of order $p^i$ with $1 \leq  i \leq n-2$ we obtain $p$ orbits, and for $\zeta$ of order $p^{n-1}$ we obtain one orbit.

\end{proof}

To find a genetic basis of $P$ we will find $(n-2)p+3$ groups $S$ with cyclic quotient $N_p(S)/S$, that satisfy condition 2 of Theorem \ref{cargen} and which are not linked under $\bizlie{P}$. 

A genetic basis $\mathcal{S}$ of $P$ consists of the following groups:
\begin{displaymath}
\begin{array}{ccccccc}
& & P &  & & &\\
\langle a\rangle & \langle a^{i}b\rangle & \langle a^p, b\rangle & & & & 1\leq i\leq p-1\\
& & \langle a^{jp}b\rangle & \langle a^{p^2}, b\rangle & & & 1\leq j\leq p-1\\
& &\ldots & & & &\\
& & & \langle a^{kp^{n-3}}b\rangle &\langle a^{p^{n-2}},\, b\rangle & &1\leq k\leq p-1\\  
& & & & & \langle b\rangle &
\end{array}
\end{displaymath}

Before proving this is a genetic basis, we see a genetic basis for $M_4(3)$ in the following, more schematic way:
\begin{displaymath}
\xymatrix@C=0.5pt@R=33pt{
\scriptstyle{1-1} & & & & & & & \bullet\boitext{P} & & & & & & & & & \\
\scriptstyle{3-4} & \bullet\boitext{\langle a\rangle}\xar[urrrrrr] & & & \bullet\boitext{\langle ab\rangle}\xar[urrr]& & &\bullet\boitext{\langle a^2b \rangle}\xar[u]& & &\bullet\boitext{\langle a^3, b\rangle}\xar[ulll] & & & & & & \\
\scriptstyle{3^2-3} & & & & \xar[ulll]\xar[u]\circ\xar[urrr]\xar[urrrrrr] & & & \bullet\boitext{\langle a^3b\rangle}\xar[urrr] & & &\bullet\boitext{\langle a^6b\rangle}\xar[u]& & & \bullet\boitext{\langle a^9,b\rangle}\xar[ulll]& & &\\
\scriptstyle{3^3-1}& & & & & & & \xar[ulll]\xar[u]\circ\xar[urrr]\xar[urrrrrr] & & &\circ\xar[urrr] & & & \circ\xar[u] & & & \bullet\boitext{\langle b\rangle}\xar[ulll]\\
& & & & & & & & & & \xar[ulll]\xar[u]\circ\boitext{\{1\}}\xar[urrr]\xar[urrrrrr] & & & & & &
}
\end{displaymath}

We will write $B$ for $\langle b\rangle$. For the groups $\langle a^{p^i}\rangle$ we will write $A_i$ and $Q_{ij}$ for $\langle a^{jp^i}b\rangle$ with $1\leq j\leq p-1$ and $0\leq i\leq n-2$.

\begin{prop}
The groups $B$, $A_0$, $A_iB$, $Q_{ij}$ with $0\leq i\leq n-3$ and $1\leq j\leq p-1$,  and $A_{n-2}B$ form a genetic basis for $M_n(p)$.
\end{prop}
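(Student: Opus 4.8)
The plan is to reduce the whole statement to a counting argument. By Lemma \ref{tam} a genetic basis of $P=M_n(p)$ has exactly $(n-2)p+3$ elements, so it is enough to show that the $(n-2)p+3$ listed subgroups are genetic and lie in pairwise distinct classes for the relation $\bizlie{P}$. Two remarks do most of the work. First, if $S\trianglelefteqslant P$ then $^xS=S$ for every $x\in P$, so condition 2 of Theorem \ref{cargen} is vacuously true; hence every normal subgroup $S$ of $P$ with $N_P(S)/S=P/S$ cyclic is a genetic subgroup. Second, for normal subgroups $S,T$ the condition of Theorem \ref{cariso} reads $T\leqslant S$ and $S\leqslant T$, so $S\bizlie{P}T$ is equivalent to $S=T$; distinct normal subgroups with cyclic quotient therefore already represent distinct $\bizlie{P}$-classes. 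After these remarks only $B=\langle b\rangle$ needs separate treatment.

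First I would deal with the normal subgroups. From $b^{-1}ab=a^{p^{n-2}+1}$ one gets $[a,b]=a^{p^{n-2}}$, hence $[P,P]=\langle a^{p^{n-2}}\rangle\cong C_p$ and $P^{\mathrm{ab}}\cong C_{p^{n-2}}\times C_p$. The normal subgroups of $P$ with cyclic quotient correspond to the subgroups of $P^{\mathrm{ab}}$ with cyclic quotient, equivalently, by duality of finite abelian groups, to the cyclic subgroups of $C_{p^{n-2}}\times C_p$; a direct count gives $(n-2)p+2$ of these. Using the identities $a^{-1}ba=ba^{-p^{n-2}}$ and $(a^{jp^i}b)^p=a^{jp^{i+1}}$ (the latter a standard computation in a metacyclic $p$-group, which uses $b^p=1$, $b^{-1}ab=a^{p^{n-2}+1}$, $p$ odd and $n\geq 3$), one checks directly that $\langle a\rangle$, the subgroups $A_iB=\langle a^{p^i},b\rangle$ for $0\leq i\leq n-2$ (with $A_0B=P$), and the subgroups $Q_{ij}=\langle a^{jp^i}b\rangle$ for $0\leq i\leq n-3$ and $1\leq j\leq p-1$ are normal in $P$ with cyclic quotient. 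For the $Q_{ij}$ this normality requires $[P,P]\leqslant Q_{ij}$, i.e. $a^{p^{n-2}}\in\langle a^{p^{i+1}}\rangle=Q_{ij}\cap\langle a\rangle$, which is precisely the restriction $i\leq n-3$. A short computation with the same identities shows these $1+(n-1)+(n-2)(p-1)=(n-2)p+2$ subgroups are pairwise distinct; since that is the total number of normal subgroups of $P$ with cyclic quotient, they are all of them, and by the opening remarks each is genetic and they lie in distinct $\bizlie{P}$-classes.

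Next I would treat $B=\langle b\rangle$. From $a^{-k}ba^k=ba^{-kp^{n-2}}$ we obtain $N_P(B)=\langle a^p,b\rangle$, so $N_P(B)/B\cong C_{p^{n-2}}$ is cyclic, and the conjugates of $B$ are the subgroups $\langle a^{kp^{n-2}}b\rangle$ for $0\leq k\leq p-1$, all contained in $N_P(B)$ because $n\geq 3$. Hence for any $x\in P$ the hypothesis $^xB\cap N_P(B)\leqslant B$ of Theorem \ref{cargen} amounts to $^xB\leqslant B$, that is to $^xB=B$; so condition 2 holds and $B$ is a genetic subgroup. Finally, if $B\bizlie{P}S$ for one of the normal subgroups $S$ above, Theorem \ref{cariso} would give $x$ with $^xB\leqslant S$ and $S\cap N_P(B)\leqslant B$; but $^xB=\langle a^{kp^{n-2}}b\rangle$ for some $k$ and $S\supseteq[P,P]=\langle a^{p^{n-2}}\rangle$, which forces $b\in S$ and hence $\langle a^{p^{n-2}},b\rangle\leqslant S\cap N_P(B)$; since $\langle a^{p^{n-2}},b\rangle$ has order $p^2$ while $|B|=p$, this is a contradiction. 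Thus $B$ is its own $\bizlie{P}$-class, so the $(n-2)p+3$ listed subgroups are genetic and pairwise inequivalent, and by Lemma \ref{tam} they form a genetic basis.

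The main obstacle is not conceptual but computational: one must establish the orders of the elements $a^{jp^i}b$ and the intersections $\langle a^{jp^i}b\rangle\cap\langle a\rangle$ from the power formula, verify normality and cyclicity of all the quotients, and check that the $(n-2)p+2$ listed normal subgroups are genuinely distinct so that the counting step applies — with the boundary case $i=0$ (where $a^{jp^{n-2}}$ lands in $\langle a^p\rangle=Q_{0j}\cap\langle a\rangle$ only because $n\geq 3$) needing to be handled apart from the generic range $i\geq 1$. Everything else is essentially formal once one observes that for normal subgroups condition 2 of Theorem \ref{cargen} is vacuous and the relation $\bizlie{P}$ degenerates to equality.
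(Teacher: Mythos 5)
Your proposal is correct and follows essentially the same strategy as the paper: reduce to the count $(n-2)p+3$ from Lemma \ref{tam}, observe that normal subgroups with cyclic quotient are automatically genetic by Theorem \ref{cargen} and pairwise inequivalent because $\bizlie{P}$ degenerates to equality on normal subgroups, and handle $B$ separately via $N_P(B)=A_1B$ and the fact that all conjugates of $B$ lie in $A_1B$. Your explicit contradiction argument showing $B$ is not linked to any normal subgroup (via $[P,P]\leqslant S$ forcing $b\in S$ and then $|S\cap N_P(B)|\geq p^2$) is a welcome elaboration of the paper's brief ``it is not hard to see that the groups related to $B$ are its conjugates,'' but it is the same route.
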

\begin{proof}
For any positive integer $u$, we have $a^uba^{-u}=ba^{up^{n-2}}$. Since $a^{p^{n-2}}$ is a central element, this implies that for any $i$ and $j$ the conjugate $a^ua^{jp^i}ba^{-u}$ is equal to $a^{up^{n-2}}a^{jp^i}b$. Since all the groups $A_{n-2}B$, $A_iB$ and $Q_{ij}$ contain $a^{p^{n-2}}$, we have  that they are all normal in $P$, and clearly they have cyclic quotient. Since $A_0$ is normal too, by Theorem \ref{cargen}, they are all genetic groups. Now, $B$ is normal in $A_1B$ and not in $P$, so its normalizer is $A_1B$. The first line of this paragraph also shows that the conjugates of $B$ are the groups $\langle a^{vp^{n-2}}b\rangle$ with $1\leq v\leq p-1$, which are all contained in $A_1B$, so by Theorem \ref{cargen}, $B$ is genetic.

According to Theorem \ref{cariso}, we need to verify that given two genetic subgroups $S$ and $T$ such that $N_P(S)/S$ and $N_P(T)/T$ have the same order, then $S$ and $T$ are not related under $\bizlie{P}$. Now, if $N_P(S)$ and $N_P(T)$ are both equal to $P$, then $S$ and $T$ are related under $\bizlie{P}$ if and only if they are equal. On the other hand, it is not hard to see that the groups that are related to $B$ are its conjugates. This finishes the proof. 
\end{proof}

\section{Finding the Whitehead groups of $C_{p^n}\times C_{p^n}$ and $M_n(p)$}

In this section we suppose that $p$ is an odd prime.

Recall, for example from Exercise 13.9 on Serre \cite{serre}, that if $G$ is a group of odd order and $c$ is the number of irreducible non-isomorphic complex representations of $G$, then $(c+1)/2$ is the number of irreducible non-isomorphic real representations of $G$. With this and Theorem \ref{rank}, we obtain the following two results about the rank of the Whitehead group for the groups in question.

\begin{lema}
The rank of the Whitehead group of $C_{p^n}\times C_{p^n}$ is 
\begin{displaymath}
\frac{kp^{2n}-(p+1)p^n+k+2}{p-1},
\end{displaymath}
where $p-1=2k$.
\end{lema}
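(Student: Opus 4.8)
The plan is to apply Theorem~\ref{rank}, which gives $rk(Wh(G))=r-q$ for $G=C_{p^n}\times C_{p^n}$, where $r$ and $q$ are the numbers of non-isomorphic irreducible real and rational representations of $G$. Since $G$ is abelian of odd order $p^{2n}$, it has exactly $c=p^{2n}$ irreducible complex representations, and by the exercise of Serre quoted at the beginning of this section, $r=(c+1)/2=(p^{2n}+1)/2$. (Concretely: the only self-conjugate complex character of an odd-order abelian group is the trivial one, so the remaining $c-1$ characters pair up into $(c-1)/2$ real representations, giving $1+(c-1)/2$ in total.)

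For $q$, I would use the identification recorded in the preliminaries for abelian $p$-groups: the simple $\rac G$-modules are in bijection with the subgroups $S\leq G$ having cyclic quotient, since each such module is the inflation from $G/S$ of the unique faithful irreducible rational representation of the cyclic group $G/S$, and $S$ is recovered as the kernel of the module. Hence $q$ equals the number of subgroups of $C_{p^n}\times C_{p^n}$ with cyclic quotient, which is the case $m=n$ of the formula $p^{n}(m-n+1)+2\frac{p^n-1}{p-1}$ recalled in Section~2 (equivalently, $1+(p+1)\sum_{i=0}^{n-1}p^i$); that is, $q=p^n+2\frac{p^n-1}{p-1}$. Equivalently, this is just the size of the genetic basis $\las$ described earlier for $P=C_{p^n}\times C_{p^n}$.

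It then remains to compute
\[
r-q=\frac{p^{2n}+1}{2}-p^n-\frac{2(p^n-1)}{p-1}
\]
and to check that it equals $\frac{kp^{2n}-(p+1)p^n+k+2}{p-1}$ with $p-1=2k$. Putting everything over the common denominator $2(p-1)$, the numerator of $r-q$ is $(p-1)(p^{2n}+1)-2(p-1)p^n-4(p^n-1)=(p-1)p^{2n}-2p^n(p+1)+(p+3)$, while clearing denominators in the target expression gives $2kp^{2n}-2(p+1)p^n+2k+4=(p-1)p^{2n}-2(p+1)p^n+(p+3)$; the two agree. This final step is a routine algebraic manipulation, so there is no genuine obstacle here. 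The only point requiring care is the bookkeeping identification of $q$ with the number of subgroups of cyclic quotient, which is exactly what the abelian case of the material in Section~1 provides, together with the subgroup count quoted in Section~2.
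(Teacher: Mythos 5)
Your proposal is correct and follows exactly the paper's argument: compute $r=(p^{2n}+1)/2$ from Serre's exercise for odd-order groups, take $q$ to be the number of subgroups with cyclic quotient (the size of the genetic basis), and apply Theorem~\ref{rank}. The algebraic verification at the end is also correct.
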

\begin{proof}
The number of irreducible non-isomorphic real representations of $C_{p^n}\times C_{p^n}$ is $(p^{2n}+1)/2$ and, as we saw in the previous section, the number of irreducible non-isomorphic rational representations is $p^n+2\frac{p^n-1}{p-1}$ which is equal to $\frac{p^n(p+1)-2}{p-1}$.
\end{proof}


\begin{lema}
The rank of the Whitehead group of $M_n(p)$ is 
\begin{displaymath}
\frac{(p-1)p^{n-3}+p^{n-1}-2(n-2)p-5}{2}.
\end{displaymath}
\end{lema}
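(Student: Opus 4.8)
The plan is to apply Theorem~\ref{rank}, which states that $rk(Wh(M_n(p))) = r - q$, where $r$ and $q$ denote the numbers of non-isomorphic real and rational irreducible representations of $M_n(p)$. The value $q = (n-2)p + 3$ is already recorded in Lemma~\ref{tam}, and since $M_n(p)$ has odd order the remark recalled at the beginning of this section (Exercise~13.9 in Serre~\cite{serre}) gives $r = (c+1)/2$, where $c$ is the number of non-isomorphic complex irreducible representations. Thus everything reduces to computing $c$, followed by an elementary substitution.

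To count $c$ I would reuse the Basmaji parametrization from the proof of Lemma~\ref{tam}: the complex irreducibles of $M_n(p)$ correspond to pairs $(\zeta,\theta)$, where $\zeta$ runs over a set of representatives for the action of $r = p^{n-2}+1$ on the group of $p^{n-1}$-th roots of unity and, if $\zeta$ has order $p^i$, the character $\theta$ runs over the $(p/s_i)$-th roots of unity. For $0\le i\le n-2$ one has $s_i = 1$ and $r$ fixes every $p^i$-th root of unity, so these roots contribute $p$ pairs for each of the $\phi(p^i)$ primitive ones (here $\phi$ denotes Euler's totient); summing over $i$ and using $\sum_{i=0}^{n-2}\phi(p^i) = p^{n-2}$, this block accounts for $p\cdot p^{n-2} = p^{n-1}$ pairs. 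For $i = n-1$ one has $s_{n-1} = p$, and since no primitive $p^{n-1}$-th root of unity is fixed by $r$, these $\phi(p^{n-1})$ roots break into $\phi(p^{n-1})/p = (p-1)p^{n-3}$ orbits, each contributing a single pair. Therefore $c = p^{n-1} + (p-1)p^{n-3}$.

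It then remains only to substitute. With $r = (c+1)/2$ and $q = (n-2)p + 3$,
\[
rk(Wh(M_n(p))) = r - q = \frac{p^{n-1} + (p-1)p^{n-3} + 1}{2} - (n-2)p - 3 = \frac{(p-1)p^{n-3} + p^{n-1} - 2(n-2)p - 5}{2},
\]
which is the asserted formula. I do not expect a genuine obstacle; the only point requiring care is the orbit bookkeeping for roots of order $p^{n-1}$ — using $s_{n-1} = p$ (established in Lemma~\ref{tam} via Lemma~2.1 of Sim~\cite{sim}) to obtain the count $(p-1)p^{n-3}$ rather than $(p-1)p^{n-2}$ — together with the routine evaluation of the sum $\sum_{i=0}^{n-2}\phi(p^i) = p^{n-2}$.
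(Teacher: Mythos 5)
Your proposal is correct and follows essentially the same route as the paper: both compute the number $c$ of complex irreducibles from the Basmaji parametrization set up in the proof of Lemma~\ref{tam}, obtaining $c=\sum_{i=0}^{n-2}\Phi(p^i)p+\Phi(p^{n-1})/p=p^{n-1}+(p-1)p^{n-3}$, then use $r=(c+1)/2$ and $q=(n-2)p+3$ in Theorem~\ref{rank}. The arithmetic checks out.
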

\begin{proof}
From the proof of Lemma \ref{tam}, we have that the number of complex non-isomorphic irreducible representations of $M_n(p)$ is
\begin{displaymath}
\sum_{i=0}^{n-2}\Phi(p^i)p+\frac{\Phi(p^{n-1})}{p},
\end{displaymath}
where $\Phi$ denotes the Euler function. This is equal to $p^{n-3}(p-1)+p^{n-1}$. On the other hand, from Lemma \ref{tam}, the number of rational non-isomorphic irreducible representations is $(n-2)p+3$.
\end{proof}

We proceed now to calculate the torsion parts. The use of the diagrams on the previous section will be of great help.

\subsection{$SK_1$ of $C_{p^n}\times C_{p^n}$}

If $\mathcal{S}=\{S_1,\ldots ,S_k\}$ is the genetic basis of $P=C_{p^n}\times C_{p^n}$, then according to the first section, we have to calculate the quotient of $T=\prod_{i=1}^k P/S_i$ over the group generated by the images of $\psi_h$, where $\psi_h:P\rightarrow T$
\begin{displaymath}
\psi_h(g)=(\bar{\psi}_{h,\,i}(g))_{i>1} \textrm{ with }\, \bar{\psi}_{h,\,i}(g)=\left\{
\begin{array}{lc}
gS_i & \textrm{if } h\in S_i\\
1 & \textrm{if } h\notin S_i
\end{array} \right. .
\end{displaymath}
Since each $\psi_{h}$ is a group homomorphism, we can take $g$ running on a set of generators of $P$.

The cases $n=1$, $2$ and $3$ were already given by Oliver in \cite{bob}, but we made them here again to show how the procedure works in general.

\subsubsection*{$C_p\times C_p$}
Let $P=C_p\times C_p$ and suppose it has generators $a$ and $b$, then the genetic basis is $\mathcal{S}=\{P,\, \langle a\rangle,\, \langle a^ib\rangle,\, \langle b\rangle\}$ with $1\leq i\leq p-1$.

The group $T$ is $P/\langle a\rangle\times \prod_{i=1}^{p-1}P/\langle a^ib\rangle\times P/\langle b\rangle$, and isomorphic to $C_p^{p+1}$. To calculate the corresponding quotient, we make the following table 
\begin{displaymath}
\begin{array}{r|ccc}
 & P/\langle a\rangle & P/\langle a^ib\rangle & P/\langle b\rangle\\
 \hline
\psi_b(a) & 1 & 1 & a\langle b\rangle\\
\psi_b(b) & 1 & 1 & 1\\
\psi_{a^ib}(a) & 1 & a\langle a^ib\rangle & 1\\
\psi_{a^ib}(b) & 1 & b\langle a^ib\rangle & 1\\
\psi_a(a) & 1 & 1 & 1\\
\psi_a(b) & b\langle a\rangle & 1 & 1\\
\psi_1(a) & 1 & a\langle a^ib\rangle & a\langle b\rangle \\
\psi_1(b) & b\langle a\rangle & b\langle a^ib\rangle & 1
\end{array}
\end{displaymath}
Observe that the first two rows give the same result for any non-trivial element in $\langle b\rangle$. The same happens for the rows corresponding to $\langle a^ib\rangle$ and for those corresponding to $\langle a\rangle$. So, from this table, actually from rows 1, 3 and 6, we see that $\langle \textnormal{Im}\psi_h\mid h\in P\rangle$ is equal to $T$ and so $Cl_1(\ent P)=SK_1(\ent P)$ is trivial.

In the following examples the diagrams of the previous section will be a useful guide to understand the tables.

\subsubsection*{$C_{p^2}\times C_{p^2}$}

Let $P=C_{p^2}\times C_{p^2}$, and suppose it has generators $a$ and $b$. The genetic basis of $P$ is $\{P, \,\langle a,\, b^p\rangle,\, \langle a^ib,\, a^p\rangle,\, \langle ab^{jp}\rangle,\, \langle a^{jp+i}b\rangle\}$ with $0\leq i,\,j\leq p-1$. The group $T$ in this case is then isomorphic to $C_p^{p+1}\times C_{p^2}^{p(p+1)}$.

We make again a table. In the isolated column on the left we write, for each $\psi_h$, the order of the element $h$. Columns 1 and 2 correspond to quotient groups of order $p$, while  columns 3 and 4 correspond to quotient groups of order $p^2$. We will write only a bar over the elements to indicate their class in the quotient.

\begin{displaymath}
\begin{array}{lr|cccc}
 && \frac{P}{\langle a,\, b^p\rangle} & \frac{P}{\langle a^ib,\, a^p\rangle} & \frac{P}{\langle ab^{jp}\rangle} & \frac{P}{\langle a^{jp+i}b\rangle} \\
 \cline{2-6}
p^2&\psi_{a^{jp+i}b}(a) &1 &\bar{a} &1 & \bar{a}\\
&\psi_{a^{jp+i}b}(b) &1 &\bar{b} &1 & \bar{b}\\
&\psi_{ab^{jp}}(a) &1 &1 &\bar{a} & 1\\
&\psi_{ab^{jp}}(b) &\bar{b} & 1&\bar{b} & 1\\
p&\psi_{a^{ip}b^p}(a) & 1 &\bar{a} (\forall i) &1 & \bar{a} (\forall j)\\
&\psi_{a^{ip}b^p}(b) &\bar{b} &\bar{b} (\forall i) &1 & \bar{b} (\forall j)\\
&\psi_{a^p}(a) &1 &\bar{a}(\forall i) &\bar{a}(\forall j) &1\\
&\psi_{a^p}(b) &\bar{b} &\bar{b}(\forall i) &\bar{b}(\forall j) &1\\
1&\psi_{1}(a) & 1 &\bar{a}(\forall i) &\bar{a}(\forall j) & \bar{a}(\forall j\forall i)\\
&\psi_{1}(b) & \bar{b} &\bar{b}(\forall i) &\bar{b}(\forall j) & \bar{b}(\forall j\forall i)
\end{array}
\end{displaymath}
Notice that the class of $b$ may be 1 in some cases. In the first four rows there is at most one entry of order $p$, and one entry of order $p^2$, they correspond to $i$ and to $jp+i$ or to $j$, depending on the element $h$ in $\psi_h$. In the next four rows, we have blocks of size $p$ of classes of elements, one block in the quotients of order $p$ and another in the quotients of order  $p^2$, as indicated in row 5 by $(\forall i)$. This is because we are taking $\psi_{h^p}$, the $p$-th power of the elements $h$ in $\psi_h$ of the first four rows, and so they are contained in $p$ groups of order $p^2$ in the genetic basis, as can be seen in the diagram of the previous section. It is easy to see that for any other element $h\in P$, $\psi_h$ evaluated in $a$ or $b$ will be a power of one of the rows of the table, so $\langle \textnormal{Im}\psi_h|h\in P\rangle$ is generated by these rows.

To see what the corresponding quotient is, first notice that, in the quotient, the element $a\langle a^{jp+i}b\rangle$ will be identified with $a^{-1}\langle a^ib,\, a^p\rangle$ and $b\langle ab^{jp}\rangle$ will be identified with $b^{-1}\langle a,\, b^p\rangle$. That is, the generators of the groups of order $p^2$ in $T$ will be identified with generators of the groups of order $p$ in $T$, so the corresponding quotient will be a quotient of the $p$-part of $T$: $P/\langle a,\, b^p\rangle\times \prod_{i=0}^{p-1}P/\langle a^ib, a^p\rangle$, which is isomorphic to $C_p^{p+1}$. 
Now, for a fixed $i$ take the $p$ rows corresponding to $0\leq j\leq p-1$ in the first one $(1,\, a\langle a^ib,\, a^p\rangle,\, 1,\, a\langle a^{jp+i}b\rangle)$ and multiply them all, this gives $(1, 1, 1, a\langle a^{jp+i}b\rangle\, (\forall j))$, where here at the end we have a block of size $p$ of elements of order $p^2$. Using row 5 we obtain then that the element $(1,\, a\langle a^ib,\, a^p\rangle\, (\forall i),\, 1,\, 1)$ is in $\langle \textnormal{Im}\psi_h |h \in P\rangle$. A similar argument with rows 4 and 8 will show that $(b\langle a,\, b^p\rangle,\, b\langle a^ib,\, a^p\rangle\, (\forall i),\, 1,\, 1)$ is also in $\langle \textnormal{Im}\psi_h |h \in P\rangle$. The use of the rest of the rows will add no information to what we have here, so the corresponding quotient is isomorphic to $P/\langle a,\, b^p\rangle\times \prod_{i=0}^{p-1}P/\langle a^ib, a^p\rangle$, over what is generated by the two elements $(1,\, a\langle a^ib,\, a^p\rangle\, (\forall i))$ and $(b\langle a,\, b^p\rangle,\, b\langle a^ib,\, a^p\rangle\, (\forall i))$, that is $SK_1(\ent P)\cong C_p^{p-1}$.

\subsubsection*{$C_{p^3}\times C_{p^3}$}

Let $P=C_{p^3}\times C_{p^3}$, and suppose it has generators $a$ and $b$. The genetic basis of $P$ is 
\begin{displaymath}
\{P, \,\langle a,\, b^p\rangle,\, \langle a^ib,\, a^p\rangle,\, \langle ab^{jp},\, b^{p^2}\rangle,\, \langle a^{jp+i}b,\, a^{p^2}\rangle,\, \langle ab^{kp^2+jp}\rangle,\, \langle a^{kp^2+jp+i}b\rangle\}
\end{displaymath}
with $0\leq i,\,j,\, k\leq p-1$. The group $T$ in this case is then isomorphic to
\begin{displaymath}
C_p^{\,p+1}\times C_{p^2}^{\,p(p+1)}\times C_{p^3}^{\,p^2(p+1)}.
\end{displaymath}
The table we obtain in this case is
\begin{displaymath}
\begin{array}{lr|cccccc}
 && \frac{P}{\langle a,\, b^p\rangle} & \frac{P}{\langle a^ib,\, a^p\rangle} & \frac{P}{\langle ab^{jp},\, b^{p^2}\rangle} & \frac{P}{\langle a^{jp+i}b,\, a^{p^2}\rangle} &\frac{P}{\langle ab^{kp^2+jp}\rangle} &\frac{P}{\langle a^{kp^2+jp+i}b\rangle} \\
 \cline{2-8}
p^3&\psi_{a^{kp^2+jp+i}b}(a) &1 &\bar{a} &1 & \bar{a}&1&\bar{a}\\
&\psi_{a^{kp^2+jp+i}b}(b) &1 &\bar{b} &1 & \bar{b}&1&\bar{b}\\
&\psi_{ab^{kp^2+jp}}(a) &1 &1 &\bar{a} & 1&\bar{a}&1\\
&\psi_{ab^{kp^2+jp}}(b) &\bar{b} & 1&\bar{b} & 1&\bar{b}&1\\
p^2&\psi_{a^{jp^2+ip}b^p}(a) & 1 &\bar{a} (\forall i) &1 & \bar{a} (\forall j)&1&\bar{a}(\forall k)\\
&\psi_{a^{jp^2+ip}b^p}(b) &\bar{b} &\bar{b} (\forall i) &1 & \bar{b} (\forall j)&1&\bar{b}(\forall k)\\
&\psi_{a^pb^{jp^2}}(a) &1 &\bar{a}(\forall i) &\bar{a}(\forall j) &1&\bar{a}(\forall k)&1\\
&\psi_{a^pb^{jp^2}}(b) &\bar{b} &\bar{b}(\forall i) &\bar{b}(\forall j) &1&\bar{b}(\forall k)&1\\
p&\psi_{a^{ip^2}b^{p^2}}(a) & 1 &\bar{a}(\forall i) &\bar{a}(\forall j) & \bar{a}(\forall j\forall i)&1&\bar{a}(\forall k\forall j)\\
&\psi_{a^{ip^2}b^{p^2}}(b) & \bar{b} &\bar{b}(\forall i) &\bar{b}(\forall j) & \bar{b}(\forall j\forall i)&1&\bar{b}(\forall k\forall j)\\
&\psi_{a^{p^2}}(a)&1&\bar{a}(\forall i) &\bar{a}(\forall j) & \bar{a}(\forall j\forall i) &\bar{a}(\forall k\forall j)&1\\
&\psi_{a^{p^2}}(b)&\bar{b}&\bar{b}(\forall i) &\bar{b}(\forall j) & \bar{b}(\forall j\forall i) &\bar{b}(\forall k\forall j)&1\\
1&\psi_1(a)&1&\bar{a}(\forall i) &\bar{a}(\forall j) & \bar{a}(\forall j\forall i) &\bar{a}(\forall k\forall j)&\bar{a}(\forall k\forall j\forall i)\\
&\psi_1(b)&\bar{b}&\bar{b}(\forall i) &\bar{b}(\forall j) & \bar{b}(\forall j\forall i) &\bar{b}(\forall k\forall j)&\bar{b}(\forall k\forall j\forall i)\\
\end{array}
\end{displaymath}
The first paragraph after the previous table can be extended to this one, adding that after row 9, we may have blocks of size $p^2$ of classes of elements, and in the last two rows there are blocks of size $p^3$.

As before, to see what the corresponding quotient is, first notice that, in the quotient, the element $a\langle a^{kp^2+jp+i}b\rangle$ will be identified with $a^{-1}\langle a^{jp+i}b,\, a^{p^2}\rangle a^{-1}\langle a^ib,\, a^p\rangle$ and $b\langle ab^{kp^2+jp}\rangle$ will be identified with $b^{-1} \langle ab^{jp},\, b^{p^2}\rangle b^{-1}\langle a,\, b^p\rangle$. That is, the generators of the groups of order $p^3$ in $T$ will be identified with products of the generators of  groups of order $p$ and $p^2$ in $T$, so the corresponding quotient will be a quotient of the part of $T$ isomorphic to $C_p^{\,p+1}\times C_{p^2}^{\,p(p+1)}$. Now, for fixed $i$ and $j$ take the $p$ rows corresponding to $0\leq k\leq p-1$ in the first one $(1,\, a\langle a^ib,\, a^p\rangle,\, 1,\,a\langle a^{jp+i}b,\, a^{p^2}\rangle,\, 1,\, a\langle a^{kp^2+jp+i}b\rangle)$ and multiply them all, this gives $(1,\, 1,\, 1,\, a^{p}\langle a^{jp+i}b,\, a^{p^2}\rangle,\, 1,\, a\langle a^{kp^2+jp+i}\rangle(\forall k))$. Next, write row 5 as 
\begin{displaymath}
(1,\, a\langle a^ib,\, a^p\rangle(\forall i),\, 1,\, a\langle a^{j_0p+i}b,\,a^{p^2}\rangle\cdots a\langle a^{j_{p-1}p+i}b,\,a^{p^2}\rangle,\, 1,\, a\langle a^{kp^2+jp+i}\rangle (\forall k) ).
\end{displaymath}
Now, for each $j_s$, multiply it by  the inverse of the element of the previous line with $j_s$, that is by
$(1,\, 1,\, 1,\, a^{-p}\langle a^{j_sp+i}b,\, a^{p^2}\rangle,\, 1,\, a^{-1}\langle a^{kp^2+jp+i}\rangle(\forall k))$, this gives that the element
\begin{equation}
(1,\, a\langle a^ib,\, a^p\rangle(\forall i),\, 1,\, a\langle a^{j_0p+i}b,\,a^{p^2}\rangle\cdots a^{1-p}\langle a^{j_sp+i}b,\, a^{p^2}\rangle\cdots a\langle a^{j_{p-1}p+i}b,\,a^{p^2}\rangle,\, 1,\, 1)
\end{equation}
is in $\langle\textnormal{Im}\psi_h\mid h\in P \rangle$. Taking the $p$-power of any of these elements, we obtain $(1,\, 1,\, 1,\, a^p\langle a^{j_0p+i}b,\,a^{p^2}\rangle\cdots a^p\langle a^{j_{p-1}p+i}b,\,a^{p^2}\rangle,\, 1,\, 1)$. By doing with row 5 as we did we row 1 we obtain  $(1,\, 1,\, 1,\, a^p\langle a^{j_0p+i}b,\,a^{p^2}\rangle\cdots a^p\langle a^{j_{p-1}p+i}b,\,a^{p^2}\rangle,\, 1,\, a\langle a^{kp^2+jp+i}b\rangle(\forall k\forall j))$. The use of these two elements allows us to obtain $(1,\, 1,\, 1,\, 1,\, 1,\, a\langle a^{kp^2+jp+i}b\rangle(\forall k\forall j))$, which by row 9 gives us that
\begin{equation}
(1,\, a\langle a^ib,\, a^p\rangle(\forall i),\, a\langle ab^{jp},\, b^{p^2}\rangle(\forall j),\, a\langle a^{jp+i}b,\, a^{p^2}\rangle(\forall j\forall i),\, 1,\,1)
\end{equation}
is in $\langle \textnormal{Im}\psi_h\mid h\in P\rangle$.

We do a similar process with the rows corresponding to the classes of $b$ and we obtain that for every $j_s$ the element
\begin{equation}
(b\langle a,\, b^p\rangle,\, b\langle a^ib,\, a^p\rangle (\forall i),\, b\langle ab^{j_0p},\,b^{p^2}\rangle\cdots b^{1-p}\langle ab^{j_sp},\, b^{p^2}\rangle\cdots b\langle ab^{j_{p-1}p},\,b^{p^2}\rangle,\, 1,\, 1,\, 1)
\end{equation}
is in $\langle \textnormal{Im}\psi_h\mid h\in P\rangle$, as well as the element
\begin{equation}
(1,\, b\langle a^ib,\, a^p\rangle(\forall i),\, b\langle ab^{jp},\, b^{p^2}\rangle(\forall j),\, b\langle a^{jp+i}b,\, a^{p^2}\rangle(\forall j\forall i),\, 1,\,1).
\end{equation}
It is not hard to see that the rest of the rows will add no information to what we have here. Doing some more calculations with (1), (2), (3) and (4), will show that the corresponding quotient is isomorphic to $C_p^{\,p+1}\times C_p^{\,(p+1)(p-2)}\times C_{p^2}^{\,p-1}$. That is $SK_1(\ent P)$ is isomorphic to $C_p^{\,p^2-1}\times C_{p^2}^{\,p-1}$.

\subsubsection*{$C_{p^4}\times C_{p^4}$}

As can be seen from the previous two cases, the process to obtain the quotient can be generalised as $n$ grows. Nonetheless, the relations we obtain from the table start getting more and more complicated. Doing a similar table as before and observing what are the relations we will have in $\langle \textnormal{Im}\psi_h\mid h\in P\rangle$, we obtain the following result for $SK_1$ of $C_{p^4}\times C_{p^4}$:
\begin{displaymath}
C_p^{\,(p^2+2)(p-1)}\times C_{p^2}^{\,2p(p-1)}\times C_{p^3}^{\,p-1}.
\end{displaymath}

\subsubsection*{A conjecture for $C_{p^n}\times C_{p^n}$}

For $n=5$ and $6$, the relations of the table become very complicated, but with the GAP \cite{gap} program that can be found in Appendix I, we obtained the following results for $p=3$:

$SK_1$ of $C_{p^5}\times C_{p^5}$ is isomorphic to
\begin{displaymath}
C_p^{\,(p^3+3)(p-1)}\times C_{p^2}^{\,(2p^2+p)(p-1)}\times C_{p^3}^{\,2p(p-1)}\times C_{p^4}^{\, p-1}.
\end{displaymath}

$SK_1$ of $C_{p^6}\times C_{p^6}$ is isomorphic to
\begin{displaymath}
C_p^{\,(p^4+4)(p-1)}\times C_{p^2}^{\,2(p^3+p)(p-1)}\times C_{p^3}^{\,p^3(p-1)}\times C_{p^4}^{\, 2p(p-1)}\times C_{p^5}^{\, p-1}.
\end{displaymath}

With these results we state the following conjecture.

\begin{conj}
Let $p$ be an odd prime and $i$ be a positive integer. For $n\geq 2i$ define: 
\begin{displaymath}
T_i(n)=(p-1)(2^{E(i)}p^{n-(\left \lfloor{i/2}\right \rfloor +2)}+(n-2i)p^{i-1}), 
\end{displaymath}
where $E(i)$ is $1$ if $i$ is even and $0$ if it is odd, and $\left \lfloor{i/2}\right \rfloor$ is the floor of $i/2$. 

Then, for an integer $n\geq 2$ and $p$ an odd prime, the multiplicity of $C_{p^i}$, for $0<i<n$, in the decomposition of $SK_1(\ent [C_{p^n}\times C_{p^n}])$ as a product of cyclic groups, is $T_i(n)$ if $2i\leq n$ and $T_{n-i}(2(n-i))$ otherwise.
\end{conj}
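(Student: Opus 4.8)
Since the statement is a conjecture, what follows is a strategy: it gives a complete method for verifying the claim for any fixed $n$ (the route already used for $n\le 4$) and indicates how one might attack the general case.

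The starting point is Theorem~\ref{elbueno} together with Observation~\ref{lapsiab} and its reformulation for abelian groups: if $\mathcal{S}=\{S_1,\dots,S_k\}$ is the genetic basis of $P=C_{p^n}\times C_{p^n}$ with $S_1=P$, then, $P$ being abelian, $SK_1(\ent P)\cong Cl_1(\ent P)\cong T/R$, where $T=\prod_{i=2}^{k}P/S_i$ and $R=\langle \mathrm{Im}\,\psi_h\mid h\in P\rangle$, with $\bar\psi_{h,i}(g)=gS_i$ if $h\in S_i$ and $1$ otherwise. Using the explicit basis from Section~2 --- the type-$A$ subgroups $\langle a^{\nu}b,a^{p^s}\rangle$ indexed by $(s,\nu)$ with $0\le\nu<p^s$, and the type-$B$ subgroups $\langle ab^{\mu},b^{p^s}\rangle$ indexed by $(s,\mu)$ with $1\le s\le n$ and $p\mid\mu<p^s$ --- one gets $T\cong\prod_{s=1}^{n}C_{p^s}^{(p+1)p^{s-1}}$, one coordinate per genetic subgroup of index $p^s$. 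Computing $SK_1(\ent P)$ thus reduces to a single integer cokernel: $T$ modulo the subgroup generated by the $2|P|$ vectors $\psi_h(a),\psi_h(b)$, $h\in P$.

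First I would record, combinatorially, the conditions ``$h\in S$'' for $h=a^{\alpha}b^{\beta}$: for type-$A$ subgroups they are congruences relating the base-$p$ digits of $\alpha$, $\beta$ and $\nu$ --- precisely the information carried by the inclusion diagrams of Section~2 --- and likewise for type-$B$. Grouping the $h$ by the $p$-adic valuation of their order then yields, for every $n$, a table of the same shape as those displayed for $n=2,3$: $2n$ blocks of rows, the $\ell$-th block having entries that are ``blocks of size $p^{\ell-1}$'' of classes $\bar a$ or $\bar b$. Next I would perform the elimination already visible for $n=3,4$: the coordinates of the cyclic members of $\mathcal{S}$ (index $p^n$) are redundant, since $\psi_{a^{\cdots}b}(a)$ and $\psi_{ab^{\cdots}}(b)$ express their generators as explicit products of generators of lower-index coordinates; so $SK_1(\ent P)$ is a quotient of the subproduct $T'\cong\prod_{s=1}^{n-1}C_{p^s}^{(p+1)p^{s-1}}$. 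Iterating this from the top of the diagram downward should reduce $R$ to a small set of generators in $T'$; a Smith normal form computation then gives the multiplicities, which one checks against the closed form. For $2i>n$ I would instead use the self-duality of $P$ and the mirror symmetry of the subgroup diagram about the index-$p^n$ line to get $m_i(n)=m_{n-i}(2(n-i))$ directly, reducing to the critical case $n=2(n-i)$. The base cases $n=2,3$ are Oliver's and $n=4$ is this paper's.

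The main obstacle is pinning down $R$ exactly and uniformly in $n$: showing both that the obvious relations generate everything --- no surprise relations for large $n$ --- and that the downward elimination collapses no more than it must. In the computations for $n\le4$ this is handled ad hoc (``the use of the rest of the rows will add no information''); the real task is to replace it by a rigorous recursion. The most promising form is an inductive statement that the sub-poset of $\mathcal{S}$ concentrated near the generator $a$ reproduces the genetic basis of $C_{p^{n-1}}\times C_{p^{n-1}}$, with a compatible behaviour of the $\psi_h$, yielding a recursion for the multiplicities $m_i(n)$; one then just verifies that $T_i(n)$ satisfies it. The GAP data for $n=5,6$ is strong evidence that such a recursion exists --- making it precise, with explicit generators of $R$ at each level, is where the work lies.
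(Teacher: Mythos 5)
This statement is a conjecture; the paper offers no proof of it, only the supporting evidence of the cases $n\le 4$ (computed by the table method you describe) and GAP computations for $n=5,6$ at $p=3$. Your verification strategy for fixed $n$ --- Theorem~\ref{elbueno} via Observation~\ref{lapsiab}, the explicit genetic basis, the table of $\psi_h(a),\psi_h(b)$ values, elimination of the coordinates of index $p^n$, then Smith normal form --- is exactly the paper's procedure, and you are right that the unresolved point is making the ``the remaining rows add nothing'' step rigorous and uniform in $n$. One caution: your proposed shortcut for the range $2i>n$, deducing $m_i(n)=m_{n-i}(2(n-i))$ from the mirror symmetry of the subgroup lattice of $C_{p^n}\times C_{p^n}$, is not justified as stated, since the right-hand side refers to $SK_1$ of a \emph{different} group $C_{p^{2(n-i)}}\times C_{p^{2(n-i)}}$; a symmetry of one lattice does not by itself relate the relation subgroups $R$ of two different groups, so this reduction would itself need proof. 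The paper makes no such duality claim --- the formula for $2i>n$ is simply part of the conjectured closed form read off from the data.
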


By Example 5 in \cite{bob}, we know that the exponent of $SK_1(\ent [C_{p^n}\times C_{p^n}])$ is $p^{n-1}$, so the multiplicity of $C_{p^n}$ is $0$.

In particular, the conjecture says that the multiplicity of $C_p$ in \mbox{$SK_1(\ent [C_{p^n}\times C_{p^n}])$}, for $n\geq 2$, is $T_1(n)=(p^{n-2}+n-2)(p-1)$, as can be seen in the examples. It also says that the multiplicity of $C_{p^{n-1}}$ is always $T_{1}(2)=p-1$. 


\subsection{$SK_1$ of $M_n(p)$}
If $n=3$, by Example 9.9  in \cite{bob} $SK_1(\ent M_n(p))\cong C_p^{\,p-1}$. The same example shows that $SK_1(\ent M_n(p))$ is non-trivial for all $n\geq 4$. 

\begin{teo}
For $n\geq 4$, the group $SK_1(\ent M_n(p))$ is isomorphic to $C_p^{(n-2)(p-1)}$.
\end{teo}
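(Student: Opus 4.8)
The plan is to reduce the computation to $Cl_1(\ent M_n(p))$ and then run Theorem~\ref{elbueno} through the genetic basis constructed in the previous section.

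First I would observe that $M_n(p)$ has the normal abelian subgroup $\langle a\rangle$ with cyclic quotient $\langle b\rangle\cong C_p$, so Proposition 12.7 of \cite{bob} gives $SK_1(\hat\ent_p M_n(p))=1$ and hence $SK_1(\ent M_n(p))\cong Cl_1(\ent M_n(p))$. Write $\mathcal S=\{P,\,A_0,\,A_iB\ (1\le i\le n-2),\,Q_{ij}\ (0\le i\le n-3,\ 1\le j\le p-1),\,B\}$ for the genetic basis. Then $Cl_1(\ent P)$ is the quotient of $T=\prod_{S\in\mathcal S\setminus\{P\}}N_P(S)/S$ by $\langle\textnormal{Im}\,\psi_h\mid h\in P\rangle$. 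Here $A_0,A_iB,Q_{ij}$ are all normal in $P$, with $P/A_0\cong C_p$, $P/A_iB\cong C_{p^i}$ and $P/Q_{ij}\cong C_{p^{i+1}}$ (the last two generated by the image of $a$), while $N_P(B)=A_1B$ and $A_1B/B\cong C_{p^{n-2}}$; I would name the corresponding generators of the cyclic slots of $T$ by $x$, $y_i$, $z_{ij}$, $w$.

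The core of the argument is the computation of the maps $\psi_h$. For a normal genetic subgroup $S$ this is routine via Roquette's theorem: $V(S)^h\neq 0$ iff $h\in S$, and then the $S$-component of $\psi_h(g)$ is $gS$, exactly as in the abelian reformulation. The real work is the non-normal subgroup $B$, where $V(B)=\ind_{A_1B}^{P}\ifl_{A_1B/B}^{A_1B}\Phi_{A_1B/B}$. One checks that $V(B)\otimes\com$ is the sum of all faithful irreducible complex representations of $M_n(p)$ (each induced from $\langle a\rangle$), and a character count then shows $V(B)^h\neq 0$ exactly when $h=1$ or $h$ is conjugate into $B$, i.e.\ $h\in A_{n-2}B\setminus A_{n-2}$; in the latter case $V(B)^h$ is one-dimensional over $K=\textnormal{End}_{\rac P}V(B)\cong\rac(\zeta_{n-2})$. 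The required determinants are read off from $V(B)=\bigoplus_{s=0}^{p-1}a^s\otimes W$, on which $a^p$ acts as the scalar $\zeta_{n-2}$, $a$ acts cyclically, and $b$ acts on $a^s\otimes W$ as a scalar $p$-th root of unity depending on $s$; one gets that $\dtr(a,V(B))$ and $\dtr(a^p,V(B)^h)$ are \emph{generators} of $A_1B/B$ (their $p$-th powers generate the subgroup of index $p$), that $\dtr(b,V(B))=1$, and that $\dtr(b,V(B)^h)$ runs over all $p$-th roots of unity as $h$ varies. I expect this determinant bookkeeping on $B$ to be the main obstacle, together with correctly pinning down every centralizer $C_P(h)$: it turns out to be $P$ for $h$ central, $\langle a\rangle$ for non-central $h\in\langle a\rangle$ (which then gives $\psi_h=1$), $Q_{0j}$ when $\langle h\rangle$ has index $p$ and $h\notin\langle a\rangle$ (again $\psi_h=1$), and $A_1B$ for all remaining $h$, so that only the values of $\psi_h$ on $a,b$ or on $a^p,b$ are needed.

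Finally I would assemble the resulting table of relations and compute the quotient. The relations from $h=1$ and from the central elements $h=a^{p^e}$ express $x$ and all the $y_i$ in terms of the $z_{ij}$ and force $w=0$ (compare the $B$-components of $\psi_1(a)$ and of $\psi_{a^{p^{n-2}}}(a)$). The relations coming from $h$ with $C_P(h)=A_1B$ then force $p\,z_{ij}=0$ for every $i,j$: the $Q_{ij}$-component of $\psi_h(b)$ kills $p^i z_{ij}$, and the $Q_{ij}$-component of $\psi_h(a^p)$ relates $p\,z_{ij}$ to $p\sum_{l<i,\,m}z_{lm}$, so induction on $i$ (base $i=1$, where the sum is empty) yields $p\,z_{ij}=0$; together with $p\,z_{0m}=0$ (automatic) this makes every $z_{ij}$ of order $p$, and one checks that the remaining relations then become trivial. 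Hence $T/\langle\textnormal{Im}\,\psi_h\rangle$ is elementary abelian on the $(n-2)(p-1)$ generators $z_{ij}$, so $SK_1(\ent M_n(p))\cong Cl_1(\ent M_n(p))\cong C_p^{(n-2)(p-1)}$.
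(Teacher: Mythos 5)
Your proposal is correct and follows essentially the same route as the paper: reduce $SK_1$ to $Cl_1$ via Proposition 12.7 of \cite{bob}, apply Theorem \ref{elbueno} to the genetic basis of the previous section, treat the normal genetic subgroups exactly as in the abelian case, handle $B$ separately by determining $V(B)^h$ and the relevant determinants and centralizers, and then read off the quotient of $T$ from the resulting table of relations. The only deviations are minor (a complex character count in place of the paper's Mackey-formula argument for $V(B)^h$, and an explicit induction on $i$ for $p\,z_{ij}=0$); note that your value $\dtr (b,V(B))=1$ is in fact the correct one, since the diagonal entries of $b$ on $\bigoplus_{s}a^s\otimes W$ are $\zeta^{-sp^{n-3}}$ with product $\zeta^{-p^{n-2}(p-1)/2}=1$, which differs from the paper's stated $\bar a^{(p-1)p^{n-2}}$ but has no effect on the final quotient.
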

\begin{proof}
We will write $P$ for $M_n(p)$ and $\mathcal{S}$ for the genetic basis we found in the previous section. Except for $B$, all the groups in $\mathcal{S}$ are normal in $P$, so their corresponding simple modules are just inflations to $P$, thus they can be treated as in the abelian cases.

For $B$, let $V$ be the corresponding simple module $V=\ind_{A_1B}^P\ifl_{A_1B/B}^{A_1B}W$ where $W=\Phi_{A_1B/B}$. Let $h$ be an element in $P$, we will determine $V^h$. Let $H=\langle h\rangle$. By Mackey's formula we have
\begin{displaymath}
\res_H^P V \cong  \bigoplus_{x\in [\frac{P}{HA_1B}]}\ind_{H\cap A_1B}^H\res_{H\cap A_1B}^{A_1B}(\ifl_{A_1B/B}^{A_1B} W)^x,
\end{displaymath}
so $(\res_H^PV)^H$ is isomorphic to
\begin{align*}
(\res_H^PV)^H&\cong\bigoplus_{x\in [\frac{P}{HA_1B}]}Hom_{\rac H}(\rac,\, \ind_{H\cap A_1B}^H\res_{H\cap A_1B}^{A_1B}(\ifl_{A_1B/B}^{A_1B} W)^x)\\
&\cong\bigoplus_{x\in [\frac{P}{HA_1B}]}Hom_{\rac (H\cap A_1B)}(\rac,\, \res_{H\cap A_1B}^{A_1B}(\ifl_{A_1B/B}^{A_1B} W)^x)\\
&\cong \bigoplus_{x\in [\frac{P}{HA_1B}]}(\res_{H\cap A_1B}^{A_1B}(\ifl_{A_1B/B}^{A_1B} W)^x)^{H\cap A_1B}.
\end{align*}
If $h$ is not in $A_1B$, then $HA_1B=P$ and we have only one summand, so we are looking for the fixed points of  $\ifl_{A_1B/B}^{A_1B} W$ by ${H\cap A_1B}$. We claim that in this case $H\cap A_1B=A_1$ and so the set of fixed points is $\{0\}$. Since the index of $H\cap A_1B$  in $H$ is $p$ and the order of $A_1$ is $p^{n-2}$, it suffices to see that $A_1$ is contained in $H$. 
Suppose that $h=a^kb^l$ with $(p,\, k)=1$. If $p$ divides $l$ then the claim is clear, so suppose also that $(p,\, l)=1$. Since $P$ is regular (Lemma 2.9 in \cite{sim}), we have that $h^p=a^{kp}c$, where $c$ is in $\mho_1(\langle a^k,\, b^l\rangle')=\{y^p\mid y \in \langle a^k,\, b^l\rangle'\}$, but $\langle a^k,\, b^l\rangle=P$ so we have $\mho_1(\langle a^k,\, b^l\rangle')=1$. This finishes this case.

If $h$ is in $A_1B$, then $H\cap A_1B=H$. To find the fixed points of $(\ifl_{A_1B/B}^{A_1B} W)^x$ by $H$ we conjugate and find the fixed points of $M=\ifl_{A_1B/B}^{A_1B} W$ by ${^x\!H}$. This module is different from $0$ if and only if $^x\!H$ is contained in $B$. If $h\neq 1$ this is equivalent to say that $^xH=B$ and $M\cong W$. 

From this we conclude that $V^H$ is different from zero only when $H=1$ or when $H=B^x$ for some $x\in \{1,a,\ldots,a^{p-1}\}$ and in this case $V^H$ is isomorphic to $W^x$.

Now we proceed to calculate the centralizers of elements in $P$.

It is easy to see that $Z(P)$, the center of $P$, is generated by $a^p$, so the centralizer of any element in $A_1$ is $P$. Now, for any other element $h$ in $A_1B$ and not in $A_1$, its centralizer is clearly $A_1B$. Finally, if $h$ is not in $A_1B$ then, as seen before, it generates a group $H$ of index $p$ and so $H$ is the centralizer of $h$.

With this information we can make a table as we did for the abelian cases.

We will make the table for the case $n=5$, this case illustrates very well how the table is in general.
\begin{displaymath}
\begin{array}{lr|cccccccc}
 && \frac{P}{\langle a\rangle} & \frac{P}{\langle a^ib\rangle} & \frac{P}{\langle a^{p},\, b\rangle} & \frac{P}{\langle a^{ip}b\rangle} &\frac{P}{\langle a^{p^2},\, b\rangle} &\frac{P}{\langle a^{ip^2}b\rangle} & \frac{P}{\langle a^{p^3},\, b \rangle} &\frac{A_1B}{B} \\
 \cline{2-10}
p^4 & \psi_{a^ib}(a^ib)&1&1&1&1&1&1&1&1\\
& \psi_a(a) &1&1&1&1&1&1&1&1\\
p^3&\psi_{a^{ip}b}(a^p) &1 &1 &1 & \bar{a}^p_i&1&1&1&1\\
&\psi_{a^{ip}b}(b) &1 &1 &1 & \bar{b}_i&1&1&1&1\\
&\psi_{a^p}(a) &1 &\bar{a} (\forall i) &\bar{a} & 1&1&1&1&1\\
&\psi_{a^p}(b) &\bar{b} &\bar{b}(\forall i) &1 & 1&1&1&1&1\\
p^2&\psi_{a^{ip^2}b}(a^p) &1 &1 &1 & 1&\bar{a}^p&\bar{a}^p_i&1&1\\
&\psi_{a^{ip^2}b}(b) &1 &1 &1 & 1&1&\bar{b}_i&1&1\\
&\psi_{a^{p^2}}(a) &1 &\bar{a} (\forall i) &\bar{a} & \bar{a} (\forall i)&\bar{a}&1&1&1\\
&\psi_{a^{p^2}}(b) &\bar{b} &\bar{b} (\forall i) &1 & \bar{b} (\forall i)&1&1&1&1\\
p&\psi_{b}(a^p) &1 &1 &1 & 1&\bar{a}^p&1&\bar{a}^p&\bar{a}^p\\
&\psi_{b}(b) &1 &1 &1 & 1&1&1&1&1\\
&\psi_{b^x}(a^p) &1 &1 &1 & 1&\bar{a}^p&1&\bar{a}^p&\bar{a}^p\\
&\psi_{b^x}(b) &1 &1 &1 & 1&1&1&1&1\\
&\psi_{a^{p^3}}(a) & 1 &\bar{a} (\forall i) &\bar{a} &\bar{a}(\forall i)&\bar{a}& \bar{a} (\forall i)&\bar{a}&1\\
&\psi_{a^{p^3}}(b) & \bar{b} &\bar{b} (\forall i) &1 &\bar{b}(\forall i)&1& \bar{b} (\forall i)&1&1\\
1&\psi_1(a) &1&\bar{a} (\forall i)&\bar{a} &\bar{a} (\forall i)&\bar{a} &\bar{a} (\forall i)&\bar{a} &\bar{a}^p \\
&\psi_1(b) &\bar{b}&\bar{b} (\forall i)&1 &\bar{b} (\forall i)&1 &\bar{b} (\forall i)&1 &\bar{a}^{-p^3}
\end{array}
\end{displaymath}
In rows 3 and 4, what we mean by $\bar{a}^p_i$ (and $\bar{b}_i$) is that $\bar{a}^p$ is only in the entry corresponding to the $i$ from $a^{ip}b$, to differentiate it from the notation $\bar{a} (\forall i)$.

To find the value of $\psi_1(a)$ in $A_1B/B$, let $\zeta_{p^{n-2}}$ be a primitive $p^{n-2}$-th root of unity and $a^j\otimes \zeta_{p^{n-2}}$, with $j$ running from 0 to $p-1$, be a basis over $\rac (\zeta_{p^{n-2}})$ of $V(B)=\ind_{A_1B}^P\ifl_{A_1B/B}^{A_1B}\Phi_{A_1B/B}$, then the matrix of the action of $a$ in this basis (with coefficients in $A_1B/B$) is 
\begin{displaymath}
\left( \begin{array}{cc}
0 & \bar{a}^p\\
Id_{p-1}& 0
\end{array}\right).
\end{displaymath}
where $Id_{p-1}$ is the identity matrix of size $p-1$. So, this matrix has determinant $\bar{a}^p$. The action of $b$ in this basis is given by the matrix
\begin{displaymath}
\left( \begin{array}{cc}
1 & 0\\
0 &\bar{a}^{p^{n-2}}Id_{p-1}
\end{array}\right).
\end{displaymath}
so $\psi_1(b)$ in $A_1B/B$ is $\bar{a}^{(p-1)p^{n-2}}$.

To finish the proof, we see what relations on $T=C_p^{p+1}\times C_{p^2}^p\times \cdots\times C_{p^{n-3}}^p\times C_{p^{n-2}}^{p+1}$ we obtain from the rows of the table. What we will do works in general or can be easily generalized for any $n\geq 4$. 

Consider then $T=C_p^{p+1}\times C_{p^2}^p\times C_{p^3}^{p+1}$. From rows 15 and 17, we see that $A_1B/B$, corresponding to the last $C_{p^3}$ will disappear in the quotient, so consider $T_1=C_p^{p+1}\times C_{p^2}^p\times C_{p^3}^p$. Rows 5 and 6 indicate that from $C_p^{p+1}$ we will obtain $C_p^{p-1}$ in the quotient. Rows 3 and 4 say that the the first $p-1$ factors of $C_{p^2}^p$ will transform into $C_p$, giving $C_p^{p-1}$. As for the last factor, $C_{p^2}$, from rows 5 and 9 we see that its generator $\bar{a}$ will be identified in the quotient with a product of generators coming from the other factors $C_{p^2}$. That is, from $C_{p^2}^p$ we obtain $C_p^{p-1}$ in the quotient. A similar procedure, using rows 7, 9 and 15 shows that from the part $C_{p^3}^p$ we obtain $C_p^{p-1}$. The rest of the rows add nothing more and we obtain $C_p^{3(p-1)}$. It is not hard to see that this procedure can be extended to any $n\geq 5$ and for the case $n=4$ is even simpler. So in general we will obtain $C_p^{(n-2)(p-1)}$.
\end{proof}

\section*{Acknowledgements}
Thanks a lot to Serge Bouc, for helping me with the GAP program.

\section*{Appendix: GAP program}

\begin{small}
\begin{verbatim}
# Computing SK1 of an abelian p-group (p odd)
#
# The (odd) prime number p
#
pr:=5;
#
# The p-group G, given by its decomposition as a product 
# of cyclic groups
#
exp:=[pr^3,pr^3];
Sort(exp);
exp:=Reversed(exp);
g:=AbelianGroup(exp);
Print("Group : ",StructureDescription(g),"\n");
#
# The exponent of G
#
eg:=exp[1];
#
# Generators of G
#
gg:=GeneratorsOfGroup(g);
lo:=exp;
nf:=Length(exp);
im:=AbelianGroup([eg]);
gim:=GeneratorsOfGroup(im);
gim:=gim[1];
#
# Computing genetic subgroups of G
#
ff:=List([1..nf],x->[0..lo[x]-1]);
ff[1]:=List([0..LogInt(eg,pr)],x->pr^x mod eg);
cff:=Cartesian(ff);
lsthom:=List(cff,s->GroupHomomorphismByImages
(g,im,gg,List([1..nf],x->gim^(eg/lo[x]*s[x]))));
lstker:=List(lsthom,x->Kernel(x));
sgp:=Set(lstker);
psgp:=List(sgp,x->Position(lstker,x));
cff:=cff{psgp};
Print("Genetic subgroups: ",Collected(List(sgp,Size)),"\n");
##
sgp:=Filtered(sgp,x->x<>g);
nsgp:=Length(sgp);
fquot:=List(sgp,x->NaturalHomomorphismByNormalSubgroup(g,x));
genquot:=List([1..nsgp],x->MinimalGeneratingSet(Image(fquot[x]))[1]);
#
# Initalizing an integral matrix with cokernel isomorphic to
# the direct product of non trivial cyclic quotients of G
#
mat:=[];
for i in [1..nsgp] do
    v:=List([1..nsgp],x->0);
    v[i]:=Index(g,sgp[i]);
    Add(mat,v);
od;
#
# Adding rows to the matrix, corresponding to relations in SK1
#
lcff:=Length(cff);
for iih in [1..lcff] do
    ih:=cff[iih];
    Print("\rComputing SK1... ",iih,"/",lcff,"\r");
    h:=Product([1..nf],x->gg[x]^ih[x]);
    pos:=Filtered([1..nsgp],x->h in sgp[x]);
    for a in gg do
        v:=List([1..nsgp],x->0);
        v{pos}:=List(pos,x->First([0..eg],
        y->genquot[x]^y=Image(fquot[x],a)));
        if not v in mat then
            Add(mat,v);
        fi;
    od;
od;
Print("Decomposition of SK1                           \r");
tt:=Collected(ElementaryDivisorsMat(Integers,mat));
tt:=Filtered(tt,x->x[1]<>1);
Print("\r                              \rSK1 = ");
if tt=[] then
    Print("0\n");
else
    for u in [1..Length(tt)-1] do
        Print("(C",tt[u][1],")^",tt[u][2]," x ");
    od;
    Print("(C",tt[Length(tt)][1],")^",tt[Length(tt)][2],"\n");
fi;
\end{verbatim}
\end{small}

\bibliographystyle{plain}
\bibliography{whitehead}

\end{document}